\newtheoremstyle{my_theoremstyle}
  {5mm}
  {1mm}
  {\it}
  {0pt}
  {\bfseries}
  {{\bf .} }
  {0mm}
  {}
\newtheoremstyle{my_definitionstyle}
  {5mm}
  {1mm}
  {\rm}
  {0pt}
  {\bfseries}
  {{\bf .} }
  {0mm}
  {}
\theoremstyle{my_theoremstyle}
\newtheorem{thm}{Theorem}[section]
\newtheorem{cor}[thm]{Corollary}
\newtheorem{lem}[thm]{Lemma}
\newtheorem{prop}[thm]{Proposition}
\theoremstyle{my_definitionstyle}
\newtheorem{defn}[thm]{Definition}
\newtheorem{rem}[thm]{Remark}
\newtheorem{ex}[thm]{Example}
\renewenvironment{proof}{\noindent{\bf Proof. }}{\vspace{.2cm}\qed}
\renewenvironment{enumerate}
{
\begin{list}{}
{\setlength{\topsep}{-.3cm} \setlength{\parsep}{0cm} \setlength{\itemsep}{.1cm} \setlength{\leftmargin}{1.0cm} \setlength{\labelwidth}{1.0cm}}
}
{\end{list}}
\numberwithin{equation}{section}
\renewcommand\section{\@startsection{section}{1}{\z@}%
                                  {6ex \@plus 1ex \@minus 1ex}%
                                  {0.2ex \@plus.2ex}%
                                  {\normalfont\large\bfseries}}
\renewcommand\subsection{\@startsection{subsection}{1}{\z@}%
                                  {-5ex \@plus -1ex \@minus -.2ex}%
                                  {0.1ex \@plus.2ex}%
                                  {\normalfont\large\it}}
\newcommand{\paragraf}{\textsection}
\renewcommand{\emptyset}{\varnothing}
\newcommand{\R}{\ensuremath{\mathbb R}}    
\newcommand{\C}{\ensuremath{\mathbb C}}    
\newcommand{\N}{\ensuremath{\mathbb N}}    
\newcommand{\gperp}{{[\perp]}}
\newcommand{\product}{[\cdot\,,\cdot]}
\newcommand{\hproduct}{(\cdot\,,\cdot)}
\newcommand{\calH}{\mathcal H}
\newcommand{\calK}{\mathcal K}         
\newcommand{\calL}{\mathcal L}         
\newcommand{\calM}{\mathcal M}
\newcommand{\calU}{\mathcal U}
\newcommand{\la}{\lambda}
\newcommand{\veps}{\varepsilon}
\newcommand{\mat}[4]
{
   \begin{pmatrix}
      #1 & #2\\
      #3 & #4
   \end{pmatrix}
}
\renewcommand{\Im}{\operatorname{Im}}
\renewcommand{\ker}{\operatorname{ker}}
\newcommand{\ran}{\operatorname{ran}}
\newcommand{\dom}{\operatorname{dom}}
\newcommand{\sap}{\sigma_{{ap}}}
\renewcommand{\sp}{\sigma_{+}}
\newcommand{\sm}{\sigma_{-}}
\newcommand{\lra}{\longrightarrow}
\newcommand{\Lra}{\Longrightarrow}
\newcommand{\Llra}{\Longleftrightarrow}
\newcommand{\Slra}{\Leftrightarrow}
\newcommand{\downto}{\downarrow}
\newcommand{\ol}{\overline}
\newcommand{\ds}{\dotplus}
\newcommand{\wt}{\widetilde}
\begin{document}
\thispagestyle{empty}
\vspace*{-.3cm}
\begin{center}
\begin{spacing}{1.5}
{\LARGE\bf Spectral functions of products of selfadjoint operators}
\end{spacing}

\vspace{1cm}
{\Large Tomas Ya.\ Azizov, Mikhail Denisov, Friedrich Philipp}

\end{center}

\vspace{.7cm}
{\bf Abstract:} Given two possibly unbounded selfadjoint operators $A$ and $G$ such that the resolvent sets of $AG$ and $GA$ are non-empty, it is shown that the operator $AG$ has a spectral function on $\R$ with singularities if there exists a polynomial $p\neq 0$ such that the symmetric operator $Gp(AG)$ is non-negative. This result generalizes a well-known theorem for definitizable operators in Krein spaces.

\vspace{.3cm}
{\it Keywords:} Product of selfadjoint operators, indefinite inner product, definitizable operator



\setlength{\parskip}{3ex plus 0.5ex minus 0.2ex}

\section{Introduction}
Let $A$ and $G$ be two selfadjoint operators in a Hilbert space
$(\calH,\hproduct)$ such that either $A$ or $G$ is bounded and
boundedly invertible. Then the product $AG$ is selfadjoint in a
Krein space. Indeed, if $G$ ($A$) is bounded and boundedly
invertible, then $AG$ is selfadjoint in the Krein space
$(\calH,\product_G)$ ($(\calH,\product_{A^{-1}})$, respectively),
where
$$
[x,y]_G = (Gx,y),\quad [x,y]_{A^{-1}} = (A^{-1}x,y),\quad x,y\in\calH.
$$
Conversely, a selfadjoint operator in a Krein space can be written as a product of two selfadjoint operators in a Hilbert space one of which is bounded and boundedly invertible.

The spectrum of a selfadjoint operator in a Krein space is symmetric with respect to the real axis. But even simple examples show that the spectrum of such operators can be empty or cover the entire complex plane. However, some classes of selfadjoint operators in Krein spaces are well-understood. Among those are the definitizable operators. A selfadjoint operator $T$ in the Krein space $(\calH,\product)$ is called definitizable if its resolvent set $\rho(T)$ is non-empty and if there exists a polynomial $p\neq 0$ with real coefficients such that
$$
[p(T)x,x]\,\ge\,0\quad\text{for all }x\in\dom p(T).
$$
This definition goes back to H.\ Langer who proved that the spectrum
of a definitizable operator $T$ -- with the possible exception of a
finite number of non-real eigenvalues which are poles of the
resolvent of $T$ -- is real and that $T$ possesses a spectral
function on $\R$ with a finite number of singularities, see
\cite{l}. Definitizable operators appear in many applications including
differential operators with indefinite weights (see, e.g.,
\cite{abt,bp,cl,km,kt,kwz}), selfadjoint operator polynomials (see,
e.g., \cite{dl,l_h}) and Sturm-Liouville equations with floating
singularity (see, e.g., \cite{jt02,jt06,lmem}).

In the present paper we extend the spectral theory of definitizable
operators from selfadjoint operators in Krein spaces to products $T
= AG$ of selfadjoint operators $A$ and $G$ in a Hilbert space which
are both allowed to be unbounded and non-invertible. Instead of
$\rho(T)\neq\emptyset$ as in the above definition of
definitizability we will have to assume that both resolvent sets
$\rho(AG)$ and $\rho(GA)$ are non-empty. In the Krein space case
(i.e.\ when $G$ or $A$ is bounded and boundedly invertible) this is
equivalent to $\rho(T)\neq\emptyset$ since in this situation the
operators $AG$ and $GA$ are similar. As is shown in Theorem
\ref{t:main}, a definitizable product $AG$ of selfadjoint operators
$A$ and $G$ has the same above-mentioned spectral properties as a
definitizable operator in a Krein space. Moreover, it has a spectral
function with a finite number of singularities, see Theorem
\ref{t:sf}. In the special case when both $A$ and $G$ are bounded,  
$A\geq{0}$ and $0\notin{\sigma_{p}(A)}$ the existence of a spectral 
function of $AG$ was already proved in \cite{denm}.

The techniques used in our proof of the existence of the spectral
function are different to those in \cite{l} where an analogue of
Stone's formula for selfadjoint operators in Hilbert spaces was used
to define the spectral function. Here, we make use of the concept of
the spectral points of positive and negative type of symmetric
operators in inner product spaces which was introduced by H.\
Langer, A.S.\ Markus and V.I.\ Matsaev in \cite{lmm}, see also
\cite{ajt,abjt,lamm} for the Krein space case. In Theorem
\ref{t:main} we prove that if the product $AG$ is definitizable,
then there exists a finite number of real points which divide the
real line into intervals which are either of positive or negative
type with respect to $AG$. Due to a theorem in \cite{lmm} this
implies the existence of local spectral functions of $AG$ on these
intervals. In the proof of Theorem \ref{t:sf} we "connect" those
local spectral functions and thus obtain a spectral function of $AG$
on $\R$ with a finite number of singularities.

The paper is arranged as follows. In the preliminaries section following this introduction we introduce the spectral points of positive and negative type of a symmetric operator in a (possibly indefinite) inner product space and prove that such an operator has a local spectral function on intervals of positive or negative type. In section \ref{s:products} we consider products $AG$ of selfadjoint operators $A$ and $G$ in a Hilbert space $(\calH,\hproduct)$ such that $\rho(AG)$ and $\rho(GA)$ are non-empty. The operator $AG$ is then symmetric with respect to the inner product $(G_0\cdot,\cdot)$, where $G_0$ is the bounded selfadjoint operator given by
$$
G_0 := G(AG - \la_0)^{-1}(AG - \ol{\la_0})^{-1},\quad\la_0\in\rho(AG)\setminus\R,
$$
and we analyze the spectra of positive and negative type of $AG$ (corresponding to the inner product $(G_0\cdot,\cdot)$). For example, it turns out that these spectra do not depend on the choice of $\la_0$. In section \ref{s:def} we particularly make use of the results in section \ref{s:products} to prove the main theorems on definitizable pairs of selfadjoint operators. In section \ref{s:application} we apply our results to Sturm-Liouville problems.

\section{Preliminaries}
Let $S$ be a linear operator in a Banach space $X$. If $S$ is bounded and everywhere defined, we write $S\in L(X)$. By the {\em resolvent set} $\rho(S)$ of $S$ we understand the set of all $\la\in\C$ for which $\ran(S - \la) = X$, $\ker(S - \la) = \{0\}$ and $(S - \la)^{-1}\in L(X)$. With this definition of $\rho(S)$, the operator $S$ is closed if $\rho(S)$ is non-empty. The operator $S$ is called {\it boundedly invertible} if $0\in\rho(S)$. The set $\sigma(S) := \C\setminus\rho(S)$ is called the {\em spectrum} of $S$. The {\it approximate point spectrum} $\sap(S)$ of $S$ is defined as the set of all $\la\in\C$ for which there exists a sequence $(x_n)\subset\dom S$ with $\|x_n\| = 1$ and $(S - \la)x_n\to 0$ as $n\to\infty$. A point $\la\in\C$ does not belong to $\sap(S)$ if and only if there exists $c > 0$ and an open neighborhood $\calU$ of $\la$ in $\C$ such that $\|(S - \mu)x\|\ge c\|x\|$ holds for all $x\in\dom S$ and all $\mu\in\calU$.

Throughout this section $(\calH,\hproduct)$ denotes a Hilbert space and $G_0$ a bounded selfadjoint operator in $\calH$. The operator $G_0$ induces a new inner product $\product$ on $\calH$ via
$$
[x,y] := (G_0x,y),\quad x,y\in\calH.
$$
The pair $(\calH,\product)$ is often referred to as a {\it $G_0$-space}. If $G_0$ is boundedly invertible, $(\calH,\product)$ is called a {\it Krein space}. A subspace $\calL$ of $\calH$ is called {\it uniformly positive} ({\it uniformly negative}) if there exists $\delta > 0$ such that
$$
[x,x]\ge\delta\|x\|^2\qquad\big([x,x]\le-\delta\|x\|^2,\text{ respectively}\big)
$$
holds for all $x\in\calL$. If $\calL$ is closed, then $\calL$ is uniformly positive (uniformly negative) if and only if $(\calL,\product)$ ($(\calL,-\product)$, respectively) is a Hilbert space. The {\it orthogonal companion} of a subspace $\calL$ is defined by
$$
\calL^\gperp := \{x\in\calH : [x,\ell] = 0\;\text{ for all }\,\ell\in\calL\}.
$$
The subspace $\calL$ is called {\it ortho-complemented} if $\calH = \calL\,+\,\calL^\gperp$. If the sum is direct, we write $\calH = \calL[\ds]\calL^\gperp$. The symbol $[\ds]$ thus denotes the direct $\product$-orthogonal sum. The following lemma will be used frequently, cf.\ \cite[Theorem 1.7.16]{ai}.

\begin{lem}\label{l:ks->oc}
Let $\calL\subset\calH$ be a closed subspace. If $(\calL,\product)$ is a Krein space, then $\calL$ is ortho-complemented. More precisely, we have
$$
\calH = \calL\,[\ds]\,\calL^\gperp.
$$
\end{lem}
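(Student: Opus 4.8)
The plan is to first dispose of the case of a uniformly definite subspace and then reduce the general case to it.

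\emph{Step 1.} I would first show that if $\calM$ is a closed subspace of a $G_0$-space which is uniformly positive, then $\calH=\calM\,[\ds]\,\calM^\gperp$; the uniformly negative case then follows by replacing $G_0$ with $-G_0$, which leaves orthogonal companions unchanged. So suppose $[x,x]\ge\delta\|x\|^2$ for all $x\in\calM$. Then $(\calM,\product)$ is a Hilbert space, since on the closed subspace $\calM$ the $\product$-norm is equivalent to $\|\cdot\|$. For fixed $x\in\calH$ the functional $m\mapsto[x,m]=(G_0x,m)$ is bounded on $(\calM,\product)$, because $|[x,m]|\le\|G_0\|\,\|x\|\,\|m\|\le\delta^{-1/2}\|G_0\|\,\|x\|\,[m,m]^{1/2}$. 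By the Riesz representation theorem there is a (unique) $Px\in\calM$ with $[x-Px,m]=0$ for all $m\in\calM$, so $x=Px+(x-Px)$ exhibits $x\in\calM+\calM^\gperp$. If $y\in\calM\cap\calM^\gperp$ then $[y,y]=0$, whence $\|y\|^2\le\delta^{-1}[y,y]=0$; thus the sum is direct.

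\emph{Step 2.} Now let $\calL=\calL^+[\ds]\calL^-$ be a fundamental decomposition of the Krein space $(\calL,\product)$. Since $\calL^+$ and $\calL^-$ are closed subspaces of $\calH$ on which $\product$, respectively $-\product$, is a Hilbert inner product, they are uniformly positive, respectively uniformly negative, by the characterization of closed definite subspaces recalled before the lemma. Applying Step 1 to $\calL^-$ gives $\calH=\calL^-\,[\ds]\,\calH_1$, where $\calH_1:=(\calL^-)^\gperp$. The subspace $\calH_1$ is closed, and $\product$ restricted to $\calH_1$ again has the form $(G_0'\cdot,\cdot)$ for a bounded selfadjoint $G_0'$ on $\calH_1$; moreover $\calL^+\subset(\calL^-)^\gperp=\calH_1$, and $\calL^+$ is a closed, uniformly positive subspace of $\calH_1$. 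Hence Step 1 applied inside $\calH_1$ yields
$$
\calH_1=\calL^+\,[\ds]\,\big(\calH_1\cap(\calL^+)^\gperp\big)=\calL^+\,[\ds]\,\calL^\gperp,
$$
where I used $\calH_1\cap(\calL^+)^\gperp=(\calL^-)^\gperp\cap(\calL^+)^\gperp=(\calL^++\calL^-)^\gperp=\calL^\gperp$. Combining the two direct-sum decompositions gives $\calH=\calL^-\,[\ds]\,\calL^+\,[\ds]\,\calL^\gperp=\calL\,[\ds]\,\calL^\gperp$ (a direct sum of three subspaces obtained by refining the decomposition $\calH=\calL^-\,[\ds]\,\calH_1$).

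The genuinely routine parts are the norm-equivalence assertion and the continuity estimate in Step 1. The only place that calls for a little attention is the bookkeeping at the end of Step 2: checking that $\product$ restricted to $\calH_1$ is again of the required form, and identifying the orthogonal companion of $\calL^+$ formed inside $\calH_1$ with the companion $\calL^\gperp$ formed in all of $\calH$. I would also note that the argument never uses that $(\calH,\product)$ itself is a Krein space — only that $G_0$ is bounded and selfadjoint — so it holds in the generality stated.
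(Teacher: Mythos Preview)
Your argument is correct. The paper does not actually prove this lemma but simply refers to \cite[Theorem 1.7.16]{ai}, so there is no approach to compare against; your Riesz-representation argument on the uniformly definite pieces of a fundamental decomposition is a standard and fully self-contained route to the result, and it indeed uses only that $G_0$ is bounded and selfadjoint.

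The one point you might make explicit is why the components $\calL^+$ and $\calL^-$ of a fundamental decomposition are closed as subspaces of $\calH$ (which you need in order to invoke the equivalence ``closed and $(\calL^\pm,\pm\product)$ Hilbert $\Leftrightarrow$ uniformly definite''). With the paper's convention, saying that $(\calL,\product)$ is a Krein space means that the Gram operator $G_\calL := P_\calL G_0|_\calL$ on the Hilbert space $(\calL,\hproduct)$ is boundedly invertible; the Krein-space topology on $\calL$ is then the $|G_\calL|$-topology, which is equivalent to the $\calH$-subspace topology, and hence $\calL^\pm$ (closed in the Krein topology) are closed in $\calH$. Alternatively, a short closed-graph argument using the nondegeneracy of $\product$ on $\calL$ gives the same conclusion without appealing to the Gram operator.
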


A closed and densely defined linear operator $T$ in $\calH$ will be called {\it $G_0$-symmetric} (or $\product$-{\it symmetric}) if
$$
[Tx,y] = [x,Ty]\quad\text{holds for all }x,y\in\dom T.
$$
This is equivalent to the symmetry of the operator $G_0T$ in the Hilbert space $(\calH,\hproduct)$, i.e.\ $G_0T\subset (G_0T)^*$.

The Riesz-Dunford spectral projection of a closed linear operator $T$ in $\calH$ with respect to a spectral set $\sigma$ of $T$ will be denoted by $E(T;\sigma)$. If $\sigma = \{\la\}$, we write $E(T;\la)$ instead of $E(T;\{\la\})$.

\begin{lem}\label{l:E}
Let $T$ be $G_0$-symmetric. If $\la\in\C$ and $\ol\la$ are isolated points of the spectrum of $T$, we have
$$
[E(T;\la)x,y] = [x,E(T;\ol\la)y]\quad\text{for all }\;x,y\in\calH.
$$
\end{lem}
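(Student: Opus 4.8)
The plan is to express the Riesz–Dunford projections as contour integrals of the resolvent and to push the $G_0$-symmetry of $T$ through the integral. First I would recall that, since $T$ is $G_0$-symmetric, its resolvent satisfies an adjointness relation with respect to $\product$: for $\mu\in\rho(T)$ with $\ol\mu\in\rho(T)$ one has
$$
[(T-\mu)^{-1}x,y]=[x,(T-\ol\mu)^{-1}y]\qquad\text{for all }x,y\in\calH.
$$
This is immediate from $[Tu,v]=[u,Tv]$ on $\dom T$: put $u=(T-\mu)^{-1}x$ and $v=(T-\ol\mu)^{-1}y$ and rearrange. Note that $\mu\mapsto\ol\mu$ maps $\rho(T)$ onto itself because $\sigma(T)$ is symmetric about $\R$ (as $G_0T$ is symmetric in the Hilbert space, its spectrum — equivalently that of $T$ — is real-symmetric).

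Next, since $\la$ and $\ol\la$ are isolated points of $\sigma(T)$, choose a sufficiently small positively oriented circle $\Gamma$ around $\la$ whose interior meets $\sigma(T)$ only in $\la$. Then $\ol\Gamma$ (the complex-conjugate curve, which is $\Gamma$ traversed with reversed orientation, i.e.\ $-\ol\Gamma$ is positively oriented around $\ol\la$) encircles only $\ol\la$. We have
$$
E(T;\la)=-\frac{1}{2\pi i}\int_\Gamma (T-\mu)^{-1}\,d\mu,\qquad
E(T;\ol\la)=-\frac{1}{2\pi i}\int_{-\ol\Gamma}(T-\mu)^{-1}\,d\mu.
$$
Now compute, using continuity of $\product$ to interchange the bounded functional $[\cdot,y]$ with the integral, followed by the resolvent identity above and the substitution $\mu\mapsto\ol\mu$:
$$
[E(T;\la)x,y]=-\frac{1}{2\pi i}\int_\Gamma [(T-\mu)^{-1}x,y]\,d\mu
=-\frac{1}{2\pi i}\int_\Gamma [x,(T-\ol\mu)^{-1}y]\,d\mu .
$$
Substituting $\nu=\ol\mu$ turns $\int_\Gamma\,d\mu$ into $-\int_{\ol\Gamma}\,\ol{d\nu}$; conjugating scalars and using that $\overline{-\tfrac{1}{2\pi i}}=\tfrac{1}{2\pi i}$ together with $\overline{[x,z]}=[z,x]$ (Hermitian symmetry of $\product$) one arrives, after a short bookkeeping of orientations, at $-\tfrac{1}{2\pi i}\int_{-\ol\Gamma}[x,(T-\nu)^{-1}y]\,d\nu=[x,E(T;\ol\la)y]$.

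The only point requiring care — and the place I expect the bookkeeping to be most error-prone rather than conceptually hard — is the orientation/conjugation juggling in the change of variables $\mu\mapsto\ol\mu$: one must keep straight that conjugating a positively oriented loop about $\la$ yields a \emph{negatively} oriented loop about $\ol\la$, and that the factor $\tfrac{1}{2\pi i}$ itself is not real. A clean way to sidestep it is to verify the identity first for $x,y$ in the range of a polynomial resolvent and then invoke density, but the direct contour computation above is shortest. When $\ol\la=\la$ (i.e.\ $\la\in\R$) the statement just says $E(T;\la)$ is $\product$-selfadjoint, which is the special case of the above with $\Gamma=\ol\Gamma$.
\qed
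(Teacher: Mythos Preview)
Your approach is essentially the paper's: both use the resolvent identity $[(T-\mu)^{-1}x,y]=[x,(T-\ol\mu)^{-1}y]$ and then match up the two contour integrals; the paper simply carries out the orientation/conjugation bookkeeping by writing down explicit parametrizations $\gamma(t)=\la+\veps e^{it}$ and $\psi(t)=\ol\la+\veps e^{it}$ rather than arguing abstractly about $\ol\Gamma$ and $\ol{d\nu}$.

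One point to fix: your justification that $\mu\mapsto\ol\mu$ maps $\rho(T)$ to itself is not valid. The Hilbert-space symmetry of $G_0T$ does \emph{not} force $\sigma(T)$ to be symmetric about $\R$ (symmetric operators can have spectrum equal to a closed half-plane, and in any case $\sigma(G_0T)$ and $\sigma(T)$ need not coincide when $G_0$ is degenerate). Fortunately you do not need this: the hypothesis already gives that both $\la$ and $\ol\la$ are isolated in $\sigma(T)$, so you can take $\veps$ small enough that the circle of radius $\veps$ about $\la$ \emph{and} its conjugate (the circle of radius $\veps$ about $\ol\la$) both lie in $\rho(T)$. That is exactly how the paper sets things up, and it makes the resolvent identity available on the whole contour without any global spectral symmetry claim.
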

\begin{proof}
Let $\veps > 0$ be a number such that the deleted discs $\{\mu\in\C : |\mu - \la|\le\veps\}\setminus\{\la\}$ and $\{\mu\in\C : |\mu - \ol\la|\le\veps\}\setminus\{\ol\la\}$ are contained in $\rho(T)$. Define the curves $\gamma,\psi : [0,2\pi]\to\C$ by
$$
\gamma(t) := \la + \veps e^{it} \:\text{ and }\; \psi(t) := \ol\la + \veps e^{it}, \;\;t\in [0,2\pi].
$$
Then for $x,y\in\calH$ we have
\begin{eqnarray*}
[E(T;\la)x,y] &=& -\frac{1}{2\pi i}\int_\gamma [(T - \mu)^{-1}x,y]\,d\mu = \frac{1}{2\pi i}\int_{\gamma^{-1}} [x,(T - \ol\mu)^{-1}y]\,d\mu \\
              &=& \frac{1}{2\pi i}\int_0^{2\pi} [x,(T - \ol\la - \veps e^{it})^{-1}y](-i)\veps e^{-it}\,dt \\
              &=& \left[ x , -\frac{1}{2\pi i}\int_0^{2\pi} i\veps e^{it}(T - \psi(t))^{-1}y\,dt \right] = [x,E(T;\ol\la)y],
\end{eqnarray*}
where $\gamma^{-1}(t) := \la + \veps e^{-it}$, $t\in [0,2\pi]$.
\end{proof}

In \cite{lmm} the spectral points of positive and negative type of a bounded $G_0$-symmetric operator were introduced. In the following definition these notions are extended to unbounded operators.

\begin{defn}
Let $T$ be a $G_0$-symmetric operator in $\calH$. A point $\la\in\sap(T)$ is called a {\em spectral point of positive {\rm (}negative{\rm )} type} of $T$ if for every sequence $(x_n)\subset\dom T$ with $\|x_n\| = 1$ and $(T - \la)x_n\to 0$ as $n\to\infty$ we have
$$
\liminf_{n\to\infty}\,[x_n,x_n] > 0\quad\Big(\limsup_{n\to\infty}\,[x_n,x_n] < 0,\;\text{respectively}\Big).
$$
The set of all spectral points of positive {\rm (}negative{\rm )} type of $T$ will be denoted by $\sp(T)$ {\rm (}$\sm(T)$, respectively{\rm )}. A set $\Delta\subset\C$ is said to be of positive {\rm (}negative{\rm )} type with respect to $T$ if
$$
\Delta\cap\sap(T)\subset\sp(T)\quad\Big(\Delta\cap\sap(T)\subset\sm(T),\,\text{respectively}\Big).
$$
\end{defn}

The following statements were proved in \cite{lmm} for bounded operators. However, the proofs can be adopted without difficulties in the unbounded case.

\begin{prop}\label{p:def_type}
The spectral points of positive and negative type of a $G_0$-sym\-met\-ric operator $T$ are real. Moreover, $\sp(T)$ and $\sm(T)$ are open in $\sap(T)$. In particular, if $\Delta$ is a compact interval which is of positive {\rm (}negative{\rm )} type with respect to $T$, then there exists a $\C$-open neighborhood $\calU$ of $\Delta$ such that $(\calU\setminus\R)\cap\sap(T) = \emptyset$ and $\calU\cap\R$ is of positive type {\rm (}negative type, respectively{\rm )} with respect to $T$. Moreover, there exists $C > 0$ such that for all $\la\in\calU$ we have
$$
\|(T - \la)x\|\,\ge\,C|\Im\la|\,\|x\|,\quad x\in\dom T.
$$
\end{prop}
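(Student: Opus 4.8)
The plan is to carry out, for the possibly unbounded operator $T$, the scheme of \cite{lmm}, establishing the four assertions in turn. The negative-type statements are obtained throughout by the obvious symmetric modifications (replace $\sp$, $\liminf>0$ and ``positive type'' by $\sm$, $\limsup<0$ and ``negative type''), so I only describe the positive-type case.

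\emph{Reality of $\sp(T)$.} Let $\la\in\sp(T)$ and let $(x_n)\subset\dom T$ satisfy $\|x_n\|=1$ and $z_n:=(T-\la)x_n\to 0$. Substituting $Tx_n=\la x_n+z_n$ into the $G_0$-symmetry identity $[Tx_n,x_n]=[x_n,Tx_n]$ and using that the form $\product$ is Hermitian (so $[x_n,x_n]\in\R$), one obtains $(\Im\la)\,[x_n,x_n]=\Im[x_n,z_n]$, whence $|\Im\la|\,|[x_n,x_n]|\le\|G_0\|\,\|z_n\|\to 0$. Since $\liminf_n[x_n,x_n]>0$, this forces $\Im\la=0$.

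\emph{Openness of $\sp(T)$ in $\sap(T)$.} Argue by contradiction: if $\la_0\in\sp(T)$ fails to be relatively interior, pick $\mu_k\to\la_0$ with $\mu_k\in\sap(T)\setminus\sp(T)$. For each $k$, the negation of the positive-type condition at $\mu_k$ provides a sequence witnessing $\liminf\le 0$; a diagonal choice within it yields a unit vector $v_k\in\dom T$ with $\|(T-\mu_k)v_k\|\le 1/k$ and $[v_k,v_k]\le 1/k$. Then $\|(T-\la_0)v_k\|\le 1/k+|\mu_k-\la_0|\to 0$ while $\liminf_k[v_k,v_k]\le 0$, contradicting $\la_0\in\sp(T)$.

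\emph{The neighbourhood $\calU$.} Since $\C\setminus\sap(T)$ is open (the characterization in the Preliminaries) and $\sp(T)$ is open in $\sap(T)$, every point of the compact interval $\Delta$ has a $\C$-open neighbourhood meeting $\sap(T)$ only inside $\sp(T)$, or not at all. Finitely many of these cover $\Delta$; their union $W$ satisfies $W\cap\sap(T)\subset\sp(T)$, and since $\sp(T)\subset\R$ by the first step, $(W\setminus\R)\cap\sap(T)=\emptyset$ and $W\cap\R$ is of positive type with respect to $T$. Choosing $\delta>0$ with $\Delta_\delta:=\{z:\operatorname{dist}(z,\Delta)\le\delta\}\subset W$ and putting $\calU:=\{z:\operatorname{dist}(z,\Delta)<\delta\}$ gives a neighbourhood with the required properties.

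\emph{The resolvent estimate (the main point).} Suppose it fails. Then for each $k$ there are $\la_k\in\calU$ and a unit vector $x_k\in\dom T$ with $z_k:=(T-\la_k)x_k$ satisfying $\|z_k\|<|\Im\la_k|/k$; in particular $\Im\la_k\ne 0$. Passing to a subsequence, $\la_k\to\la_*\in\overline\calU\subset W$, and since $z_k\to 0$ we get $(T-\la_*)x_k\to 0$, so $\la_*\in\sap(T)\cap W$. By the third step $\la_*\in\R$, hence $\la_*\in(W\cap\R)\cap\sap(T)\subset\sp(T)$, so $\liminf_k[x_k,x_k]>0$. On the other hand, the identity from the first step applied to $z_k$ gives $|\Im\la_k|\,|[x_k,x_k]|=|\Im[x_k,z_k]|\le\|G_0\|\,\|z_k\|<\|G_0\|\,|\Im\la_k|/k$, so $[x_k,x_k]\to 0$ — a contradiction. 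The estimate is trivial for $\la\in\calU\cap\R$. I expect this last step to be the crux, and it is the only place where the positive-type hypothesis is genuinely needed: the $G_0$-symmetry identity produces a factor $|\Im\la_k|$ on both sides, which cancels and leaves exactly the quantity $[x_k,x_k]$ controlled by positive type; the one technical prerequisite — that $\sap(T)$ is closed and satisfies the neighbourhood characterization for unbounded $T$ — is supplied by the Preliminaries.
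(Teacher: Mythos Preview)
Your proof is correct and is precisely the adaptation of the argument from \cite{lmm} that the paper invokes; the paper itself does not supply a proof but only remarks that the bounded-case proofs in \cite{lmm} carry over without difficulty to the unbounded setting, which is exactly what you have written out.
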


\begin{defn}\label{d:sf}
Let $J\subset\R$ be a bounded or unbounded open interval and let $s\subset J$ be a finite set. The system consisting of all bounded Borel subsets $\Delta$ of $J$ with $\ol\Delta\subset J$ the boundary points of which are not contained in $s$ will be denoted by $\mathfrak R_s(J)$. If $s = \emptyset$, we simply write $\mathfrak R(J)$. Let $S$ be a closed and densely defined linear operator in the Banach space $X$. A set function $E$ mapping from $\mathfrak R_s(J)$ into the set of bounded projections in $X$ is called a {\it local spectral function of $S$ on $J$} ({\it with the set of critical points $s = s(E)$}) if the following conditions are satisfied for all $\Delta,\Delta_1,\Delta_2\in\mathfrak R_s(J)$:
\begin{enumerate}
\item[{\rm (S1)}] $E(\Delta_1\cap\Delta_2) = E(\Delta_1)E(\Delta_2)$.
\item[{\rm (S2)}] If $\Delta_1\cap\Delta_2 = \emptyset$, then $E(\Delta_1\cup\Delta_2) = E(\Delta_1) + E(\Delta_2)$.
\item[{\rm (S3)}] $E(\Delta)$ commutes with every operator $B\in L(\calH)$ for which $BS\,\subset\,SB$.
\item[{\rm (S4)}] $\sigma(S|E(\Delta)\calH)\subset\ol{\sigma(S)\cap\Delta}$.
\item[{\rm (S5)}] $\sigma(S|(I - E(\Delta))\calH)\subset\ol{\sigma(S)\setminus\Delta}$.
\end{enumerate}
The points $\la\in s(E)$ for which the strong limits
$$
s-\lim_{t\to 0}E([\la-\veps,\la-t])\quad\text{ and }\quad s-\lim_{t\to 0}E([\la+t,\la+\veps])
$$
do not exist for sufficiently small $\veps > 0$ are called the {\it singularities of $E$}.
\end{defn}

\begin{rem}
Note that $BS\subset SB$ is equivalent to $B(S - \la)^{-1} = (S - \la)^{-1}B$ for every $\la\in\rho(S)$ if $\rho(S)\neq\emptyset$.
\end{rem}

Let $S$ be a closed operator in a Banach space and let $\Delta$ be a compact set in $\C$. A closed subspace $\calL_\Delta\,\subset\,\dom S$ is called the {\it maximal spectral subspace} of $S$ corresponding to $\Delta$ if the following holds:
\begin{enumerate}
\item[(a)] $S\calL_\Delta\subset\calL_\Delta$.
\item[(b)] $\sigma(S|\calL_\Delta)\subset\sigma(S)\cap\Delta$
\item[(c)] If $\calL\subset\dom S$ is a closed subspace such that (a) and (b) hold with $\calL_\Delta$ replaced by $\calL$ then $\calL\subset\calL_\Delta$.
\end{enumerate}
By $\C^+$ ($\C^-$) we denote the open upper (lower, respectively) halfplane. The following theorem has been shown for bounded $G_0$-symmetric operators in \cite{lmm}.

\begin{thm}\label{t:lsf}
Let $J$ be a bounded or unbounded open interval in $\R$ which is of positive {\rm (}negative{\rm )} type with respect to the $G_0$-symmetric operator $T$. If each of the sets $\C^+\cap\rho(T)$ and $\C^-\cap\rho(T)$ has an accumulation point in $J$, then $T$ has a local spectral function $E$ without critical points on $J$ with the following properties {\rm (}$\Delta\in\mathfrak R(J)${\rm )}:
\begin{enumerate}
\item[{\rm (i)}]   The subspace $E(\Delta)\calH$ is uniformly positive {\rm (}uniformly negative, respectively{\rm )}.
\item[{\rm (ii)}]  The operator $E(\Delta)$ is $G_0$-symmetric.
\item[{\rm (iii)}] If $\Delta$ is compact, then $E(\Delta)\calH$ is the maximal spectral subspace of $T$ corresponding to $\Delta$.
\end{enumerate}
\end{thm}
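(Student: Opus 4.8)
The plan is to carry over to the unbounded setting the construction of the local spectral function given in \cite{lmm} for bounded $G_0$-symmetric operators. That construction is essentially \emph{local}: it uses only the resolvent of the operator in a $\C$-neighbourhood of the interval, where the resolvent is controlled in norm. Hence the work splits into (a) producing such a neighbourhood of $J$ out of the hypotheses and (b) checking that the unboundedness of $T$ does no harm. For (a) one shows: for every compact interval $\Delta\subset J$ there is a connected $\C$-open set $\calU\supset\Delta$ with $\calU\setminus\R\subset\rho(T)$, with $\calU\cap\R$ of positive type with respect to $T$, and with $\|(T-\la)^{-1}\|\le C\,|\Im\la|^{-1}$ for $\la\in\calU\setminus\R$ and some $C>0$. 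To see this, enlarge $\Delta$ to a compact subinterval $\Delta'$ of $J$ containing an accumulation point of $\C^+\cap\rho(T)$; applying Proposition \ref{p:def_type} to $\Delta'$ yields a connected $\C$-open $\calU\supset\Delta'$ with $(\calU\setminus\R)\cap\sap(T)=\emptyset$, with $\calU\cap\R$ of positive type, and with the stated estimate. Since $\partial\sigma(T)\subset\sap(T)$, the connected open set $\calU\cap\C^+$ contains no boundary point of $\sigma(T)$ and therefore lies entirely in $\sigma(T)$ or entirely in $\rho(T)$; as it contains points of $\C^+\cap\rho(T)$ arbitrarily close to the accumulation point in $\Delta'$, it lies in $\rho(T)$. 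The same reasoning applied to $\C^-$, with the second accumulation hypothesis, and intersection of the two neighbourhoods gives the claim.

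Next, fix a compact interval $\Delta=[\alpha,\beta]\subset J$ and let $\calU$ be the neighbourhood from the first step. Following \cite{lmm}, one associates to $\Delta$ a spectral subspace $\calL_\Delta\subset\dom T$: it is the closed linear span of the approximate eigenvectors of $T$ attached to points of $\Delta$, built concretely from the resolvent of $T$ on $\calU\setminus\R$ — the resolvent points of $\C^\pm\cap\rho(T)$ accumulating in $J$ together with the estimate $\|(T-\la)^{-1}\|\le C|\Im\la|^{-1}$ are exactly what make the construction go through even when $\alpha$ or $\beta$ (indeed all of $\Delta$) lies in $\sigma(T)$; when $\alpha,\beta\in\rho(T)$, $\calL_\Delta$ is simply the range of the Riesz projection $-\tfrac{1}{2\pi i}\oint(T-\la)^{-1}\,d\la$ over a contour in $\calU$ surrounding $\Delta$. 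As in \cite{lmm} one verifies that the corresponding idempotent is bounded, $G_0$-symmetric (via Lemma \ref{l:E} when $\alpha,\beta\in\rho(T)$, and by a limiting argument otherwise), commutes with every $B\in L(\calH)$ with $BT\subset TB$, has range in $\dom T$ invariant under $T$, and — the crucial step — that $\calL_\Delta$ is \emph{uniformly} positive: otherwise one would find $(x_n)\subset\calL_\Delta$ with $\|x_n\|=1$, $(T-\mu_n)x_n\to0$ for some $\mu_n\to\mu\in\Delta$ and $\liminf_n[x_n,x_n]\le0$, contradicting that $J$ is of positive type. Hence $(\calL_\Delta,\product)$ is a Hilbert space, and Lemma \ref{l:ks->oc} gives $\calH=\calL_\Delta\,[\ds]\,\calL_\Delta^\gperp$; we let $E(\Delta)$ be the $\product$-orthogonal projection onto $\calL_\Delta$. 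Properties (S1)--(S5), (i) and (ii) then follow as in \cite{lmm}: (ii) from the $G_0$-symmetry of the idempotent; (i) by construction; (S4) because $\sigma(T|_{\calL_\Delta})\subset\ol{\sigma(T)\cap\Delta}$ is compact; (S5) because the resolvent of $T|_{\calL_\Delta^\gperp}$ coincides with $(T-\la)^{-1}$ off $\Delta$; and (iii), maximality of $\calL_\Delta$, again from the uniform positivity. Finally, $E$ is extended from compact intervals to all of $\mathfrak R(J)$ by the standard additivity and monotone-limit procedure, and since $\calL_\Delta$ is always uniformly positive there are no critical points.

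The only points at which the unboundedness of $T$ must be watched are (S4), (S5) and the inclusion $E(\Delta)\calH\subset\dom T$; these are harmless because $\ol\Delta$ is compact, so $T|_{\calL_\Delta}$ is bounded, while the possibly unbounded remainder of $\sigma(T)$ enters only through (S5). Thus the genuine obstacle is not the unboundedness but the part imported from \cite{lmm}: constructing $E(\Delta)$ when $\alpha$ or $\beta$ lies in $\sigma(T)$, and, above all, establishing that $\calL_\Delta$ is uniformly — not merely strictly — positive, which is precisely where the hypothesis that $J$ is of positive type is used. (An alternative is to pass, by a Cayley transform with $\la_0,\ol{\la_0}\in\rho(T)$ near $J$, to a bounded $G_0$-unitary operator and invoke the analogue of \cite{lmm} for such operators; a direct reduction to a \emph{bounded} $G_0$-symmetric operator, however, does not seem possible in general, since it would require a real point of $\rho(T)$ near $J$.)
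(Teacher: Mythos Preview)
Your step (a) --- producing, for each compact $\Delta\subset J$, a connected $\C$-neighbourhood $\calU$ with $\calU\setminus\R\subset\rho(T)$ and the first-order growth bound --- is correct and agrees with the paper, including the $\partial\sigma(T)\subset\sap(T)$ connectivity argument to upgrade $\sap$-free to $\sigma$-free.

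For (b) the paper takes a different and shorter route than your direct transplant of the \cite{lmm} construction. Once the estimate $\|(T-\la)^{-1}\|\le C|\Im\la|^{-1}$ holds on $\calU\setminus\R$, the paper invokes a theorem of Lyubich--Matsaev \cite[Chapter II, \paragraf 2, Theorem 5]{lm}: for any closed operator with such a growth condition near a compact real interval $J'$, the maximal spectral subspace $\calL$ corresponding to $J'$ exists and $T|\calL$ is \emph{bounded}. Now $T|\calL$ is a bounded $\product$-symmetric operator with $\sigma(T|\calL)=\sp(T|\calL)$, so \cite[Theorem 3.1]{lmm} applies verbatim and yields that $(\calL,\product)$ is a Hilbert space. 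The local spectral function is then simply $E(\cdot):=E_\calL(\cdot)P_\calL$, where $E_\calL$ is the ordinary spectral measure of the selfadjoint operator $T|\calL$ in the Hilbert space $(\calL,\product)$ and $P_\calL$ is the $\product$-orthogonal projection onto $\calL$ furnished by Lemma~\ref{l:ks->oc}.

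Thus the reduction to a bounded $G_0$-symmetric operator that you dismiss in your final parenthetical \emph{is} available --- not via a global Cayley or resolvent transform of $T$, but by first cutting out the maximal spectral subspace with \cite{lm} and then restricting. This buys a lot: your ``crucial step'' (uniform positivity of $\calL_\Delta$) becomes a one-line citation of the bounded result instead of something to be reproved, and there is no need to re-examine the \cite{lmm} machinery for boundary points $\alpha,\beta\in\sigma(T)$ in the unbounded setting. Your direct approach is not wrong in outline, but phrases like ``as in \cite{lmm} one verifies\ldots'' and ``the construction goes through'' conceal exactly the work that the paper's route makes unnecessary.
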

\begin{proof}
Let $J$ be of positive type with respect to $T$. As a consequence of the uniqueness of a local spectral function (see \cite[Lemma 3.14]{j2}) it is sufficient to prove that the operator $T$ has a local spectral function on each compact subinterval of $J$. Let $J'$ be such an interval. Choosing a larger compact interval which contains accumulation points of $\C^+\cap\rho(T)$ and $\C^-\cap\rho(T)$ it is seen from Proposition \ref{p:def_type} that there exists an open neighborhood $\calU$ of $J'$ in $\C$ such that $\calU\setminus\R\subset\rho(T)$ and that there exists $C > 0$ such that
\begin{equation}\label{e:growth}
\|(T - \la)^{-1}\|\,\le\,\frac C {|\Im\la|}
\end{equation}
holds for all $\la\in\calU\setminus\R$. By \cite[Chapter II, \paragraf 2, Theorem 5]{lm} the maximal spectral subspace $\calL$ of $T$ corresponding to $J'$ exists and $T|\calL$ is bounded. As $T|\calL$ is also $\product$-symmetric and $\sigma(T|\calL) = \sp(T|\calL)$ it follows from \cite[Theorem 3.1]{lmm} that $(\calL,\product)$ is a Hilbert space. Denote by $E_\calL$ the spectral measure of the selfadjoint operator $T|\calL$ in $(\calL,\product)$ and by $P_\calL$ the projection onto $\calL$ with $\ker P_\calL = \calL^\gperp$ which exists due to Lemma \ref{l:ks->oc}. Then $E(\cdot) := E_\calL(\cdot)P_\calL$ defines a local spectral function of $T$ on $J'$.
\end{proof}

\section{Products of selfadjoint operators}\label{s:products}
Throughout this section let $A$ and $G$ be (possibly unbounded and/or non-invertible) selfadjoint operators in the Hilbert space $(\calH,\hproduct)$. Each of the statements in the following proposition follows from or is an easy consequence of \cite[Remark 2.5]{hkm} and \cite[Theorem 1.1]{hkm}, see also \cite{hm}.

\begin{prop}\label{p:basic}
Let $A$ and $G$ be selfadjoint operators in $\calH$. If
\begin{equation}\label{e:ass}\tag{$*$}
\rho(AG)\neq\emptyset\quad\text{and}\quad\rho(GA)\neq\emptyset,
\end{equation}
then both operators $AG$ and $GA$ are closed and densely defined and
\begin{equation}\label{e:adj}
(AG)^* = GA.
\end{equation}
Moreover,
\begin{equation}\label{e:sigs}
\sigma(AG)\setminus\{0\} = \sigma(GA)\setminus\{0\}.
\end{equation}
In addition, for $\la\in\rho(AG)\setminus\{0\}$ the following relations hold:
\begin{align*}
A(GA - \la)^{-1} &= \ol{(AG - \la)^{-1}A}\\
G(AG - \la)^{-1} &= \ol{(GA - \la)^{-1}G}.
\end{align*}
\end{prop}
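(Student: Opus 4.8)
The plan is to take the structural part of the statement from the general theory of products of two selfadjoint operators and then to obtain the two resolvent formulas by a short direct computation.

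Since $\rho(AG)\neq\emptyset$ and $\rho(GA)\neq\emptyset$, the definition of $\rho(\cdot)$ fixed in the preliminaries already forces $AG$ and $GA$ to be closed. That these operators are moreover densely defined, that $(AG)^*=GA$ (i.e.\ \eqref{e:adj}), and that the non-zero spectra coincide as in \eqref{e:sigs}, is exactly what \cite[Theorem 1.1]{hkm} and \cite[Remark 2.5]{hkm} provide (see also \cite{hm}); I would simply cite these, since the dense definedness of a product of two unbounded selfadjoint operators is the only genuinely non-elementary ingredient and is not something I would want to reprove here.

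It remains to derive the resolvent identities, which is the part I would actually carry out. Fix $\la\in\rho(AG)\setminus\{0\}$; by \eqref{e:sigs} we then also have $\la\in\rho(GA)$, so $(GA-\la)^{-1}\in L(\calH)$. Since its range lies in $\dom(GA)\subset\dom A$, the operator $A(GA-\la)^{-1}$ is everywhere defined, and it is closed, being the composition of the closed operator $A$ with a bounded operator; hence $A(GA-\la)^{-1}\in L(\calH)$ by the closed graph theorem, and likewise $G(AG-\la)^{-1}\in L(\calH)$. Now take $x\in\dom A$ and set $u:=(GA-\la)^{-1}x\in\dom(GA)$, so that $GAu=x+\la u$. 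As both $x$ and $u$ lie in $\dom A$, we get $GAu\in\dom A$, i.e.\ $Au\in\dom(AG)$, and $AGAu=A(x+\la u)=Ax+\la Au$, which rearranges to $(AG-\la)(Au)=Ax$. Applying $(AG-\la)^{-1}$ gives
\[
A(GA-\la)^{-1}x=(AG-\la)^{-1}Ax\qquad(x\in\dom A).
\]
In other words $(AG-\la)^{-1}A\subset A(GA-\la)^{-1}$, and since the right-hand side is bounded this already yields $\ol{(AG-\la)^{-1}A}\subset A(GA-\la)^{-1}$. For the reverse inclusion, given $y\in\calH$ pick $x_n\in\dom A$ with $x_n\to y$ (possible since $\dom A$ is dense); then $(AG-\la)^{-1}Ax_n=A(GA-\la)^{-1}x_n\to A(GA-\la)^{-1}y$ by boundedness of $A(GA-\la)^{-1}$, so $\big(y,A(GA-\la)^{-1}y\big)$ lies in the closure of the graph of $(AG-\la)^{-1}A$. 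Hence $\ol{(AG-\la)^{-1}A}=A(GA-\la)^{-1}$, and the second identity follows verbatim after interchanging the roles of $A$ and $G$ (using that $\la\in\rho(GA)\setminus\{0\}$ as well).

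The only step demanding any care is the domain bookkeeping in the displayed identity — checking that $Au\in\dom(AG)$, so that the manipulation of $AGAu$ is legitimate — together with the minor point that one must pass to closures, since $(AG-\la)^{-1}A$ is a priori only defined on $\dom A$ and need not be bounded there. Everything else is either immediate or imported from \cite{hkm}.
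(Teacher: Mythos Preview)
Your proposal is correct and aligns with the paper's own treatment: the paper does not give a proof but simply states that all parts of the proposition follow from, or are easy consequences of, \cite[Remark~2.5 and Theorem~1.1]{hkm} (see also \cite{hm}). You make the same citation for closedness, dense definedness, \eqref{e:adj} and \eqref{e:sigs}, and then supply a clean direct verification of the two resolvent identities; the domain bookkeeping and the passage to closures are handled correctly, so your write-up is in fact slightly more self-contained than the paper's.
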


In our main results (Theorems \ref{t:main} and \ref{t:sf} below) we require that \eqref{e:ass} is satisfied. Since in applications this condition might be hard to verify, the following sufficient conditions for \eqref{e:ass} may be helpful.

\begin{lem}
The following conditions are sufficient for {\rm\eqref{e:ass}} to hold:
\begin{enumerate}
\item[{\rm (a)}] $G$ is bounded and $\rho(GA)\neq\emptyset$.
\item[{\rm (b)}] $G$ is boundedly invertible and $\rho(AG)\neq\emptyset$.
\item[{\rm (c)}] $(AG)^* = GA$ and $\rho(AG)\neq\emptyset$.
\item[{\rm (d)}] $\rho(AG)\neq\emptyset$, $GA$ is closed and for some $\la\in\rho(AG)\setminus\{0\}$ the operator $G(AG - \la)^{-1}A$ is bounded on $\dom A$.
\end{enumerate}
\end{lem}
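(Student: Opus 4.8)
The plan is to establish each of the four sufficient conditions by reducing it to the hypotheses of Proposition \ref{p:basic}, namely that \emph{both} $\rho(AG)$ and $\rho(GA)$ are non-empty. In each case one already has one of the two resolvent sets non-empty by assumption, so the work is to deduce the other.

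For (a), suppose $G\in L(\calH)$ and $\la_0\in\rho(GA)$. I would first observe that $0\in\rho(GA)$ can be arranged or else treated directly: in general, for $\la\neq 0$ one checks the elementary identity relating $(GA-\la)^{-1}$ and $(AG-\la)^{-1}$ on appropriate domains, using that $G$ is everywhere defined and bounded. Concretely, if $\la\in\rho(GA)\setminus\{0\}$, set $R:=(GA-\la)^{-1}\in L(\calH)$; one verifies by direct computation that $S:=\la^{-1}(GRA - I)$, suitably interpreted, together with boundedness of $G$, exhibits $(AG-\la)^{-1}$ as a bounded everywhere-defined operator, so $\la\in\rho(AG)$. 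The point is that $AG$ and $GA$ are ``similar up to the bounded factor $G$'' away from $0$, and boundedness of $G$ makes the algebra go through without domain pathologies. Hence $\rho(AG)\neq\emptyset$ and \eqref{e:ass} holds. Condition (b) is then immediate from (a) applied with the roles of $A$ and $G$ interchanged: if $G$ is boundedly invertible, then $G^{-1}$ is bounded and selfadjoint, $GA$ and $AG$ are related by conjugation/products with $G$ and $G^{-1}$, and $\rho(AG)\neq\emptyset$ forces $\rho(GA)\neq\emptyset$; alternatively one notes $GA = G(AG)G^{-1}$ in the appropriate sense, so the two operators are similar and have equal resolvent sets.

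For (c), assume $(AG)^* = GA$ and $\rho(AG)\neq\emptyset$. Pick $\la_0\in\rho(AG)\setminus\R$ (possible since $\rho(AG)$ is open and non-empty, and if it meets $\R$ it also contains non-real points, or one argues directly). Then $(AG-\la_0)^{-1}\in L(\calH)$, so by taking Hilbert-space adjoints, $\big((AG-\la_0)^{-1}\big)^* = \big((AG)^* - \ol{\la_0}\big)^{-1} = (GA - \ol{\la_0})^{-1}\in L(\calH)$, which shows $\ol{\la_0}\in\rho(GA)$ and hence $\rho(GA)\neq\emptyset$. For (d), the hypotheses are $\rho(AG)\neq\emptyset$, $GA$ closed, and $G(AG-\la)^{-1}A$ bounded on $\dom A$ for some $\la\in\rho(AG)\setminus\{0\}$. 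Using the resolvent identity $(GA-\la)^{-1} = -\la^{-1}I + \la^{-1}G(AG-\la)^{-1}A$ on $\dom A$ (formally: $GA(GA-\la)^{-1} = I + \la(GA-\la)^{-1}$, and $GA = GA$ factors through $G(AG-\la)^{-1}A$), the boundedness hypothesis yields a bounded everywhere-defined candidate for $(GA-\la)^{-1}$; combined with $GA$ being closed and the density of $\dom(GA)$, one concludes $\la\in\rho(GA)$, so $\rho(GA)\neq\emptyset$.

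The main obstacle in all four parts is the same: keeping careful track of operator domains when manipulating products and resolvents of unbounded operators, so that formal identities such as $(GA-\la)^{-1} = -\la^{-1}I + \la^{-1}G(AG-\la)^{-1}A$ are actually justified (equality of closed operators, correct closures, density of domains). The cleanest route is to lean on Proposition \ref{p:basic} and the relations $A(GA-\la)^{-1} = \ol{(AG-\la)^{-1}A}$ and $G(AG-\la)^{-1} = \ol{(GA-\la)^{-1}G}$ stated there, together with the references \cite{hkm,hm}; indeed parts (a)--(c) are really just extracting from that circle of results the statement that one resolvent set being non-empty is enough under the stated extra hypotheses, while (d) is the one genuinely requiring an explicit resolvent computation plus a closedness argument to promote a bounded left/right inverse to a two-sided bounded inverse.
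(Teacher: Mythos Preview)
Your arguments for (c) and (d) are essentially the paper's. Two small corrections: in (c) there is no need to pick $\la_0\notin\R$; for any $\la_0\in\rho(AG)$, taking Hilbert-space adjoints of $(AG-\la_0)^{-1}\in L(\calH)$ gives $\ol{\la_0}\in\rho((AG)^*)=\rho(GA)$, which is exactly what the paper does. In (d), the density you should invoke is that of $\dom A$, not of $\dom(GA)$ (which is not known a priori). The paper shows $\dom A\subset\ran(GA-\la)$ and that $(GA-\la)^{-1}$ is bounded on $\dom A$; since $GA-\la$ is closed, the closure of this bounded restriction forces $\ran(GA-\la)=\calH$ and $(GA-\la)^{-1}\in L(\calH)$.

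For (a) and (b) the paper takes a shorter and more unified route than yours. Rather than manipulate resolvents directly, it establishes the adjoint identity and reduces to (c): if $G$ is bounded then $(GA)^* = A^*G^* = AG$ (the standard fact $(BA)^*=A^*B^*$ for bounded $B$), and if $G$ is boundedly invertible then $(AG)^* = GA$; in either case (c) immediately yields the missing resolvent condition. Your similarity argument for (b), $GA = G(AG)G^{-1}$, is correct once the domain equality is checked, so it is a legitimate alternative. Your approach to (a), however, contains a slip: the resolvent formula should be $(AG-\la)^{-1} = \la^{-1}\big(A(GA-\la)^{-1}G - I\big)$, not $\la^{-1}(GRA-I)$. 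With the corrected formula you still owe an argument that $A(GA-\la)^{-1}$ is bounded; this follows from the closed graph theorem since $\ran(GA-\la)^{-1}=\dom(GA)=\dom A$ (because $G$ is bounded) and $A$ is closed, but you did not say so. The paper's adjoint route sidesteps all of these verifications.
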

\begin{proof}
If (c) holds, then $\sigma(GA) = \sigma((AG)^*) = \{\ol\la : \la\in\sigma(AG)\}$ and hence $\rho(GA)\neq\emptyset$. If (b) holds, then $AG$ and $GA$ are closed and $(AG)^* = GA$. If (a) holds, then $AG$ and $GA$ are closed and $(GA)^* = AG$. Hence, in both cases \eqref{e:ass} follows from (c). Assume now that (d) holds. Then the operator $GA - \la$ is injective. Moreover, for $x\in\dom A$ we have $G(AG - \la)^{-1}Ax  - x \in\dom(GA)$ and
$$
(GA - \la)\big(G(AG - \la)^{-1}Ax  - x\big) = \la x.
$$
This shows that $\dom A\subset\ran(GA - \la)$ and that $(GA - \la)^{-1}|\dom A$ is bounded. As the closure of $(GA - \la)^{-1}|\dom A$ coincides with $(GA - \la)^{-1}$ (on $\ran(GA - \la)$), it follows that $(GA - \la)^{-1}\in L(\calH)$.
\end{proof}

\begin{rem}\label{r:no_star}
If $AG\in L(\calH)$ or $GA\in L(\calH)$ then either $A,G\in L(\calH)$ or ($*$) does not hold.
\end{rem}

Indeed, if $AG\in L(\calH)$, then $\dom G = \calH$ yields $G\in L(\calH)$. Suppose that ($*$) holds. Then, according to Proposition \ref{p:basic}, we have $GA = (AG)^*\in L(\calH)$ and thus $A\in L(\calH)$.

\begin{prop}\label{p:zero_res}
If \eqref{e:ass} is satisfied, then the following statements are equivalent.
\begin{enumerate}
\item[{\rm (a)}] $AG$ is boundedly invertible.
\item[{\rm (b)}] $\ran(AG) = \calH$.
\item[{\rm (c)}] $GA$ is boundedly invertible.
\item[{\rm (d)}] $\ran(GA) = \calH$.
\item[{\rm (e)}] $A$ and $G$ are boundedly invertible.
\end{enumerate}
In particular, $\sigma(AG) = \sigma(GA)$.
\end{prop}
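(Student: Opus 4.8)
The plan is to show the chain of implications (a) $\Rightarrow$ (b), (b) $\Rightarrow$ (e), (e) $\Rightarrow$ (c), (c) $\Rightarrow$ (d), (d) $\Rightarrow$ (e), (e) $\Rightarrow$ (a), using the symmetry between the roles of $AG$ and $GA$ provided by Proposition \ref{p:basic}. The implications (a) $\Rightarrow$ (b) and (c) $\Rightarrow$ (d) are trivial, as is (e) $\Rightarrow$ (a): if $A$ and $G$ are both boundedly invertible, then $AG$ is a product of two bounded (by the closed graph theorem, since boundedly invertible selfadjoint operators have everywhere-defined bounded inverses — wait, $A,G$ themselves need not be bounded) — more carefully, if $A,G$ are boundedly invertible then $0\in\rho(A)\cap\rho(G)$, so $G^{-1}A^{-1}\in L(\calH)$ is a two-sided inverse of $AG$ (one checks $AG\,G^{-1}A^{-1}=I$ on $\calH$ and $G^{-1}A^{-1}\,AGx=x$ for $x\in\dom(AG)$), whence $0\in\rho(AG)$. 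Symmetrically (e) $\Rightarrow$ (c). So the whole proposition reduces to proving (b) $\Rightarrow$ (e); the implication (d) $\Rightarrow$ (e) then follows by interchanging $A$ and $G$, which is legitimate because condition \eqref{e:ass} is symmetric in $A$ and $G$ and so are the hypotheses of Proposition \ref{p:basic}.

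So assume $\ran(AG)=\calH$. First I would deduce $\ran A=\calH$: since $\calH=\ran(AG)\subset\ran A$, the selfadjoint operator $A$ is surjective, hence $\ker A=(\ran A)^\perp=\{0\}$, so $A$ is injective with full range, and as $A$ is closed the closed graph theorem gives $A^{-1}\in L(\calH)$, i.e.\ $A$ is boundedly invertible. Now I want to conclude that $G$ is boundedly invertible as well. Since $A$ is boundedly invertible, $AG$ and $GA$ are similar via $A$: indeed $A^{-1}(AG)A = GA$ on the appropriate domains, so $\sigma(GA)=\sigma(AG)$ and, because $\ran(AG)=\calH$ and $AG$ is closed with $\ker(AG)\subset\ker(GA)$ — here I need $0\notin\sigma(AG)$. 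To get that, note $AG=(GA)^*$ by \eqref{e:adj} applied with roles swapped (valid since \eqref{e:ass} is symmetric), or directly: having shown $A^{-1}\in L(\calH)$, the operator $AG$ being surjective and closed, it suffices to show it is injective. If $AGx=0$ then $Gx\in\ker A=\{0\}$, so $Gx=0$; thus $\ker(AG)=\ker G$. Hence $AG$ surjective forces $G$ surjective (as $\ran(AG)=\ran(AG)=A(\ran G)$ and $A$ is a bijection, $\ran G=A^{-1}\calH=\calH$), so $G$ is a surjective selfadjoint operator, thus injective, thus — being closed — boundedly invertible by the closed graph theorem. This gives (e), and simultaneously $\ker(AG)=\ker G=\{0\}$, so $0\in\rho(AG)$, i.e.\ (a).

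For the final assertion $\sigma(AG)=\sigma(GA)$: by \eqref{e:sigs} the two spectra agree away from $0$, so it remains to show $0\in\sigma(AG)\iff 0\in\sigma(GA)$, equivalently $0\in\rho(AG)\iff 0\in\rho(GA)$; but $0\in\rho(AG)$ is condition (a) and $0\in\rho(GA)$ is condition (c), which we have just shown to be equivalent. Hence $\sigma(AG)=\sigma(GA)$.

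The main obstacle I anticipate is the bookkeeping around domains and closedness when passing from surjectivity of the products to bounded invertibility of the factors: one must be careful that $\ker(AG)=\ker G$ and $\ran(AG)=A(\ran G)$ really do hold as stated given that $AG$ is defined on $\{x\in\dom G:\,Gx\in\dom A\}$, and that the closed graph theorem is applicable — this hinges on $A$ (resp.\ $G$) being closed, which is part of selfadjointness, and on the products being closed, which is Proposition \ref{p:basic}. Once these point-set facts are pinned down, the rest is the short implication chain above.
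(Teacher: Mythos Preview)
Your overall plan is fine, but the implication (b) $\Rightarrow$ (e) has a real gap at the point where you claim $\ran(AG)=A(\ran G)$ and conclude $\ran G=A^{-1}\calH=\calH$. In fact, since $\dom(AG)=\{x\in\dom G:\,Gx\in\dom A\}$, one only has
\[
\ran(AG)=A\big(\ran G\cap\dom A\big),
\]
and $A$ is a bijection from $\dom A$ onto $\calH$, not from $\calH$ onto $\calH$. Hence $\ran(AG)=\calH$ yields $\ran G\cap\dom A=\dom A$, i.e.\ $\dom A\subset\ran G$, which does \emph{not} give surjectivity of $G$ when $A$ is unbounded. You flagged exactly this bookkeeping issue in your final paragraph, and it does bite.

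What survives is this: $\dom A\subset\ran G$ with $\dom A$ dense gives $\ker G=(\ran G)^\perp\subset(\dom A)^\perp=\{0\}$, hence $\ker(AG)=\ker G=\{0\}$, and together with $\ran(AG)=\calH$ and closedness of $AG$ you obtain (a). This is precisely the paper's argument for (b) $\Rightarrow$ (a). To then reach (e), the paper does not try to squeeze $G$ surjective out of (b) directly; instead it uses the adjoint relation $(AG)^*=GA$ from Proposition \ref{p:basic} to get (a) $\Leftrightarrow$ (c), and then the symmetric argument applied to $\ran(GA)=\calH$ yields $\ran G=\calH$, hence $G$ boundedly invertible. So the fix is to route (b) $\Rightarrow$ (a) $\Rightarrow$ (c) $\Rightarrow$ ``$G$ boundedly invertible'' rather than attempting (b) $\Rightarrow$ (e) in one step.
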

\begin{proof}
Clearly, (a) implies (b). Assume that (b) holds. Then $\ran A = \calH$ (which implies $\ker A = \{0\}$) and $\dom A\subset\ran G$ (which implies $\ker G = (\ran G)^\perp\subset (\dom A)^\perp = \{0\}$). Hence, $\ker(AG) = \{0\}$ and (a) follows. An analog reasoning shows that (c) holds if and only if (d) holds. The equivalence (a)$\Slra$(c) is a consequence of \eqref{e:adj}. Since (a) implies that $A$ is boundedly invertible, (c) implies that $G$ is boundedly invertible and (e) implies both (a) and (c), the proposition is proved.
\end{proof}

\begin{cor}\label{c:sigs}
Assume that \eqref{e:ass} holds. Then for each $\la\in\C$ the following statements hold.
\begin{enumerate}
\item[{\rm (i)}]   $\la\in\sigma(AG)\;\Llra\;\ol\la\in\sigma(AG)$.
\item[{\rm (ii)}]  $\la\in\sigma(AG)\setminus\sap(AG)\;\Lra\;\ol\la\in\sigma_p(AG)$.
\end{enumerate}
\end{cor}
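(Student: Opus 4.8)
The plan is to deduce both statements from the adjoint relation $(AG)^* = GA$ in Proposition \ref{p:basic}, together with the spectral symmetry \eqref{e:sigs}--and, for the point $0$, Proposition \ref{p:zero_res}. For (i), recall that for a closed densely defined operator $S$ in a Hilbert space one has $\sigma(S^*) = \{\ol\mu : \mu\in\sigma(S)\}$. Applying this with $S = AG$ and using $(AG)^* = GA$ gives $\sigma(GA) = \{\ol\mu : \mu\in\sigma(AG)\}$. Now combine this with Proposition \ref{p:zero_res}, which asserts $\sigma(AG) = \sigma(GA)$ under \eqref{e:ass}. Hence $\sigma(AG) = \{\ol\mu : \mu\in\sigma(AG)\}$, which is precisely statement (i): $\la\in\sigma(AG)$ if and only if $\ol\la\in\sigma(AG)$.

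For (ii), suppose $\la\in\sigma(AG)\setminus\sap(AG)$. Since $\la\notin\sap(AG)$, the operator $AG - \la$ is bounded below; as $AG$ is closed this forces $\ran(AG - \la)$ to be closed and $\ker(AG-\la) = \{0\}$. Because $\la\in\sigma(AG)$, we cannot have $\ran(AG - \la) = \calH$, so $\ran(AG - \la)$ is a proper closed subspace and therefore $\ran(AG - \la)^\perp \neq \{0\}$. The standard identity $\ran(AG - \la)^\perp = \ker\big((AG)^* - \ol\la\big) = \ker(GA - \ol\la)$ then shows $\ol\la\in\sigma_p(GA)$. To transfer this to $AG$, I distinguish two cases. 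If $\ol\la\neq 0$, then $\sigma_p(GA)\setminus\{0\} = \sigma_p(AG)\setminus\{0\}$: indeed, if $GAx = \ol\la x$ with $x\neq 0$ and $\ol\la\neq 0$, put $y := Ax$; then $y\neq 0$ (else $\ol\la x = 0$), $y\in\dom A$ is clear, $y\in\dom G$ since $GAx$ exists, and $AGy = AGAx = A(\ol\la x) = \ol\la y$, so $\ol\la\in\sigma_p(AG)$. Hence $\ol\la\in\sigma_p(AG)$ in this case. If $\ol\la = 0$, then $\la = 0$ as well; since $0\in\sigma(AG)$, Proposition \ref{p:zero_res} rules out $\ran(AG) = \calH$, and the argument in the proof of that proposition shows $\ker(AG) = \ker G \neq \{0\}$ (from $0\in\sigma(GA) = \sigma(AG)$ and the range characterizations there), whence $0\in\sigma_p(AG)$. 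In either case $\ol\la\in\sigma_p(AG)$, which is (ii).

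The only genuinely delicate point is the symmetric handling of the eigenvalue $0$: the similarity-type transfer $x\mapsto Ax$ between eigenvectors of $GA$ and $AG$ breaks down when the eigenvalue vanishes, so one must invoke Proposition \ref{p:zero_res} and the explicit kernel analysis from its proof rather than a clean intertwining. Everything else is routine Banach/Hilbert space duality for closed operators. One should also double-check at the outset that $AG$ is closed (so that the adjoint formalism applies), but this is guaranteed by Proposition \ref{p:basic} under \eqref{e:ass}.
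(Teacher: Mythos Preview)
Your proofs of (i) and of (ii) for $\la\neq 0$ are correct and match the paper's argument essentially line for line (the paper also uses $(AG)^*=GA$, $\sigma(AG)=\sigma(GA)$, and the intertwining $x\mapsto Ax$ to pass from $\sigma_p(GA)$ to $\sigma_p(AG)$). One minor quibble: for $y:=Ax$ what you need is $y\in\dom(AG)$, i.e.\ $y\in\dom G$ and $Gy\in\dom A$; the latter holds because $Gy=GAx=\ol\la x$ and $x\in\dom(GA)\subset\dom A$. Your phrase ``$y\in\dom A$ is clear'' is not the relevant check.

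The case $\la=0$, however, contains a genuine gap. You assume $0\in\sigma(AG)\setminus\sap(AG)$ and then claim that the proof of Proposition~\ref{p:zero_res} yields $\ker(AG)=\ker G\neq\{0\}$. It does not: that proof shows the implication $\ran(AG)=\calH\Rightarrow\ker(AG)=\{0\}$, whose contrapositive gives nothing about $\ker(AG)$ from $\ran(AG)\neq\calH$. More fundamentally, your target conclusion $0\in\sigma_p(AG)$ is incompatible with the standing hypothesis $0\notin\sap(AG)$, since $\sigma_p\subset\sap$. So you cannot hope to prove the consequent directly; the only way through is to show that the hypothesis $0\in\sigma(AG)\setminus\sap(AG)$ is empty. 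Your attempt via $\ker(GA)\neq\{0\}$ also stalls: if $GAx=0$ with $x\neq 0$ and $Ax\neq 0$ you do get $Ax\in\ker(AG)$ and hence a contradiction, but if $Ax=0$ you are left with $x\in\ker A$, and there is no reason such an $x$ must lie in $\ran G$.

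The paper handles $\la=0$ differently: it first establishes (ii) for all $\la\neq 0$, observes that $0\notin\sap(AG)$ forces a whole neighborhood $\calU$ of $0$ to miss $\sap(AG)$, and then uses the nonzero case of (ii) to conclude $\calU\setminus\{0\}\subset\rho(AG)$. A Fredholm-index continuity argument on $\calU$ (index zero near $0$, and $\ker(AG)=\{0\}$) then gives $0\in\rho(AG)$, so the hypothesis in (ii) is vacuous at $\la=0$.
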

\begin{proof}
From Propositions \ref{p:basic} and \ref{p:zero_res} it follows that $\la\in\rho(AG)$ implies
$$
\ol\la\in\rho((AG)^*) = \rho(GA) = \rho(AG).
$$
This proves (i). Let us prove (ii) for $\la\neq 0$. If $\la\in\sigma(AG)\setminus\sap(AG)$, $\la\neq 0$, then it is well-known that $\ol\la\in\sigma_p((AG)^*) = \sigma_p(GA)$. Hence, there exists $x\in\dom(GA)\setminus\{0\}$ such that $GAx = \ol\la x$. Therefore, $GAx\in\dom A$ and $(AG - \ol\la)Ax = A(GA - \ol\la)x = 0$. Since $Ax\neq 0$ (otherwise, $GAx = 0$ and thus $x=0$), we conclude that $\ol\la\in\sigma_p(AG)$. But (ii) also holds for $\la=0$ as in this case the left hand side of the implication (ii) is never true. To see this, note that $0\notin\sap(AG)$ implies that there is a neighborhood $\calU$ of zero such that $\calU\cap\sap(AG) = \emptyset$. Now, from (ii) for $\la\neq 0$ it follows that $\calU\setminus\{0\}\subset\rho(AG)$. Hence, the Fredholm index of $AG - \la$ for $\la\in\calU$ is constantly zero. And as $\ker(AG) = \{0\}$, it follows that also $0\in\rho(AG)$.
\end{proof}

If \eqref{e:ass} is satisfied, by Corollary \ref{c:sigs} there exists $\la_0\in\C\setminus\R$ such that $\la_0,\ol{\la_0}\in\rho(AG)$, and thus, the operator
\begin{equation}\label{e:G0}
G_0 := G(AG - \la_0)^{-1}(AG - \ol{\la_0})^{-1}
\end{equation}
is bounded. Moreover, due to Proposition \ref{p:basic} we have
\begin{align*}
G_0^*
&= (GA - \la_0)^{-1}\big(G(AG - \la_0)^{-1}\big)^*\\
&= (GA - \la_0)^{-1}\big((GA - \la_0)^{-1}G\big)^*\\
&= (GA - \la_0)^{-1}G(AG - \ol{\la_0})^{-1}\\
&= G(AG - \la_0)^{-1}(AG - \ol{\la_0})^{-1} = G_0
\end{align*}
and
\begin{align*}
G_0AG
&= G(AG - \la_0)^{-1}(AG - \ol{\la_0})^{-1}AG\\
&\subset GAG(AG - \la_0)^{-1}(AG - \ol{\la_0})^{-1}\\
&= GAG_0 = (G_0AG)^*.
\end{align*}
This shows that $G_0$ is selfadjoint and that $AG$ is $G_0$-symmetric. Equivalently, $AG$ is symmetric with respect to the inner product
\begin{equation}\label{e:ip}
[x,y] := (G_0x,y),\quad x,y\in\calH.
\end{equation}
Note that the inner product $\product$ is in general not a Krein space inner product. It might even be degenerate.

For the rest of this section we assume that \eqref{e:ass} holds and fix $\la_0\in\rho(AG)\setminus\R$, the operator $G_0$ in \eqref{e:G0} and the inner product $\product$ in \eqref{e:ip}. The spectra of positive and negative type of $AG$ are connected with the inner product $\product$ which itself depends on $\la_0\in\rho(AG)\setminus\R$. The following lemma shows that $\sp(AG)$ and $\sm(AG)$ are in fact independent of $\la_0$.

\begin{lem}\label{l:spsp}
Let $\la\in\C$. Then $\la\in\sp(AG)$ {\rm (}$\la\in\sm(AG)${\rm )} if and only if for each sequence $(x_n)\subset\dom AG$ with $\|x_n\| = 1$ and $(AG - \la)x_n\to 0$ as $n\to\infty$ we have
$$
\liminf_{n\to\infty}\,(Gx_n,x_n) > 0\quad\left(\limsup_{n\to\infty}\,(Gx_n,x_n) < 0,\text{ respectively}\right).
$$
\end{lem}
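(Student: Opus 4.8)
The plan is to establish one asymptotic identity comparing the two quadratic forms along an arbitrary approximating sequence, and then to read off both equivalences from it. Fix $\la_0\in\rho(AG)\setminus\R$ as in the text, put $R_0:=(AG-\la_0)^{-1}$ and $\ol R_0:=(AG-\ol{\la_0})^{-1}\in L(\calH)$ so that $G_0=GR_0\ol R_0$, and write $p(\la):=(\la-\la_0)(\la-\ol{\la_0})$. We may assume $\la\in\sap(AG)$, since otherwise no sequence as in the statement exists; then $\la\ne\la_0,\ol{\la_0}$, so $p(\la)\ne 0$.

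The key preliminary fact --- which I expect to be the main obstacle --- is that, although $G$ is unbounded, the operator $GR_0=G(AG-\la_0)^{-1}$ is bounded. Indeed $\ran R_0\subset\dom(AG)\subset\dom G$, so $GR_0$ is everywhere defined, and since $G$ is closed and $R_0$ is bounded, $GR_0$ is closed; hence $GR_0\in L(\calH)$ by the closed graph theorem, and likewise $G\ol R_0\in L(\calH)$. This is exactly what lets one carry an unbounded $G$ across a resolvent without losing control; the superficially tempting argument that would approximate $R_0\ol R_0x_n$ by $p(\la)^{-1}x_n$ and then ``apply $G$'' fails, because $\|Gx_n\|$ need not stay bounded.

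Now let $(x_n)\subset\dom(AG)$ with $\|x_n\|=1$ and $r_n:=(AG-\la)x_n\to0$. Applying the resolvents to the identity $(AG-\mu)x_n-(\la-\mu)x_n=r_n$ with $\mu=\la_0$ and $\mu=\ol{\la_0}$ and composing, one obtains
\[
R_0\ol R_0\,x_n=\frac{1}{p(\la)}\,x_n+e_n,\qquad e_n:=-\frac{1}{p(\la)}R_0r_n-\frac{1}{\la-\ol{\la_0}}R_0\ol R_0\,r_n .
\]
All vectors appearing here lie in $\dom(AG)\subset\dom G$, so $G$ may be applied termwise; since $GR_0\in L(\calH)$ and $GR_0\ol R_0=G_0\in L(\calH)$, we get $Ge_n=-p(\la)^{-1}(GR_0)r_n-(\la-\ol{\la_0})^{-1}G_0r_n\to0$, and therefore
\[
(G_0x_n,x_n)=(GR_0\ol R_0\,x_n,x_n)=\frac{1}{p(\la)}\,(Gx_n,x_n)+(Ge_n,x_n)=\frac{1}{p(\la)}\,(Gx_n,x_n)+o(1).
\]

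Finally I would read off the conclusion. If $\la\in\R$, then $p(\la)=|\la-\la_0|^2>0$, so for every such sequence $\liminf_n(G_0x_n,x_n)>0\iff\liminf_n(Gx_n,x_n)>0$ and $\limsup_n(G_0x_n,x_n)<0\iff\limsup_n(Gx_n,x_n)<0$; since $[x,y]=(G_0x,y)$, this is precisely the two asserted equivalences when compared with the definitions of $\sp(AG)$ and $\sm(AG)$. If $\la\notin\R$, then $\la\notin\sp(AG)\cup\sm(AG)$ by Proposition \ref{p:def_type}, and I would check that both right-hand conditions fail as well: fix any such sequence (one exists since $\la\in\sap(AG)$). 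If $p(\la)^{-1}\notin\R$, taking imaginary parts in the last display, with both quadratic forms real, forces $(Gx_n,x_n)\to0$, so neither the $\liminf>0$ nor the $\limsup<0$ condition can hold; if $p(\la)^{-1}=\kappa\in\R\setminus\{0\}$, the display reads $(G_0x_n,x_n)=\kappa(Gx_n,x_n)+o(1)$, and were one of the two conditions to hold for every admissible sequence, the sign of $\kappa$ would place $\la$ in $\sp(AG)$ or $\sm(AG)$, contradicting $\la\notin\R$. Since the right-hand conditions do not involve $\la_0$, this also yields the independence of $\sp(AG)$ and $\sm(AG)$ from $\la_0$ mentioned above.
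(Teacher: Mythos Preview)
Your proof is correct. Both you and the paper hinge on the boundedness of $G(AG-\la_0)^{-1}$ to relate the two quadratic forms, but the packaging differs. You establish a single asymptotic identity $[x_n,x_n]=p(\la)^{-1}(Gx_n,x_n)+o(1)$ along the given approximate eigensequence and read off both implications at once. The paper instead argues the two directions separately: for one it passes to an auxiliary approximate eigensequence $y_n=(\la-\la_0)(AG-\la_0)^{-1}x_n$, checks that $\|y_n\|\to 1$, and applies the hypothesis to $y_n$ via the exact identity $[x_n,x_n]=|\la-\la_0|^{-2}(Gy_n,y_n)$; for the other it uses $(Gx_n,x_n)=[(AG-\la_0)x_n,(AG-\la_0)x_n]$ and expands. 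Your route avoids the auxiliary sequence and the attendant normalization check, at the modest price of the explicit error-term computation for $e_n$. You also treat non-real $\la\in\sap(AG)$ explicitly; the paper is silent on this case, in effect relying on Proposition~\ref{p:def_type} and the understood reading that the right-hand condition includes $\la\in\sap(AG)$.
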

\begin{proof}
Assume that the condition in the lemma on the approximate eigensequences of $AG$ holds and let $(x_n)\subset\dom AG$ with $\|x_n\| = 1$ and $(AG - \la)x_n\to 0$ as $n\to\infty$. Set
$$
y_n := (\la - \la_0)(AG - \la_0)^{-1}x_n.
$$
Then we have
$$
(AG - \la)y_n = (\la - \la_0)(x_n + (\la_0 - \la)(AG - \la_0)^{-1}x_n) = (\la - \la_0)(x_n - y_n).
$$
On the other hand,
$$
(AG - \la)y_n = (\la - \la_0)(AG - \la_0)^{-1}(AG - \la)x_n\,\lra\,0
$$
as $n\to\infty$. Hence $\|y_n\|\to 1$ and since
$$
[x_n,x_n] = (G(AG - \la_0)^{-1}x_n,(AG - \la_0)^{-1}x_n) = \frac{1}{|\la - \la_0|^2}(Gy_n,y_n)
$$
we conclude
$$
\liminf_{n\to\infty}\,[x_n,x_n] = \frac{1}{|\la - \la_0|^2}\,\liminf_{n\to\infty}\,(Gy_n,y_n) > 0.
$$
Conversely, let $\la\in\sp(AG)$ and let $(x_n)\subset\dom AG$ with $\|x_n\| = 1$ and $(AG - \la)x_n\to 0$ as $n\to\infty$. Since
$$
(Gx_n,x_n) = [(AG - \la_0)x_n,(AG - \la_0)x_n],
$$
we obtain from $(AG - \la)x_n\to 0$ as $n\to\infty$:
$$
\liminf_{n\to\infty}\,(Gx_n,x_n) = |\la - \la_0|^2\,\liminf_{n\to\infty}\,[x_n,x_n] > 0,
$$
which proves the assertion.
\end{proof}

\begin{cor}\label{c:zero_+}
Assume that \eqref{e:ass} holds and that $0\in\sp(AG)\cup\sm(AG)$. Then $G$ is boundedly invertible.
\end{cor}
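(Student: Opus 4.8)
The plan is to argue by contradiction. As a preliminary reduction, note that replacing $(A,G,\la_0)$ by $(A,-G,-\la_0)$ replaces $AG$ by $-AG$ and $G_0$ by $-G_0$; since a sequence is an approximate eigensequence of $-AG$ at $0$ exactly when it is one of $AG$ at $0$, and $((-G_0)x,x)=-(G_0x,x)$, this interchanges $\sp(AG)$ and $\sm(AG)$ while preserving \eqref{e:ass} and the conclusion ``$G$ boundedly invertible''. So we may assume $0\in\sp(AG)$, which by Lemma \ref{l:spsp} means $0\in\sap(AG)$ and that every $(x_n)\subset\dom(AG)$ with $\|x_n\|=1$ and $AGx_n\to 0$ satisfies $\liminf_n(Gx_n,x_n)>0$; call this property $(\dagger)$. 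Applying $(\dagger)$ to the constant sequence $x_n\equiv v$ for $v\in\ker G$, $\|v\|=1$ (note $v\in\dom(AG)$, $AGv=0$, $(Gv,v)=0$) shows $\ker G=\{0\}$. Since $G$ is selfadjoint, $G$ is boundedly invertible if and only if $0\notin\sap(G)$, so it suffices to prove $0\notin\sap(G)$.

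Assume, for a contradiction, that $0\in\sap(G)$; as $0\notin\sigma_p(G)$ this gives $0\in\sess(G)$. Since $0\in\sap(AG)\subset\sigma(AG)=\sigma(GA)$ by Proposition \ref{p:zero_res}, and since $0\notin\sap(GA)$ would force $0\in\sigma_p(GA)$ by Corollary \ref{c:sigs}(ii) applied to the product $GA$, also $0\in\sap(GA)$. Now I use $0\in\sp(AG)$ a second time, at the level of spectral subspaces: by Proposition \ref{p:def_type} the set $\sp(AG)$ is open in $\sap(AG)$, so a sufficiently small compact interval $\Delta=[-\delta,\delta]$ is of positive type for $AG$ and admits a $\C$-open symmetric neighbourhood $\calU$ with $(\calU\setminus\R)\cap\sap(AG)=\emptyset$ — hence, via Corollary \ref{c:sigs}(ii), $\calU\setminus\R\subset\rho(AG)$ — and $\|(AG-\la)^{-1}\|\le C|\Im\la|^{-1}$ on $\calU\setminus\R$. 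Because $(AG-\la)^*=GA-\ol\la$, the same resolvent estimate holds for $GA$ near $\Delta$, so by \cite[Chapter II, \paragraf 2, Theorem 5]{lm} (as in the proof of Theorem \ref{t:lsf}) the operator $GA$ has a maximal spectral subspace $\wt\calL$ corresponding to $\Delta$ with $GA\restr\wt\calL$ bounded and $0\in\sigma(GA\restr\wt\calL)$. Since $\wt\calL\subset\dom(GA)\subset\dom A$ and $A\restr\wt\calL$ is everywhere defined on $\wt\calL$ and closed, the closed graph theorem yields $M>0$ with $\|Az\|\le M\|z\|$ for all $z\in\wt\calL$.

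Finally, pick $(z_n)\subset\wt\calL$ with $\|z_n\|=1$ and $GAz_n\to 0$, and set $\zeta_n:=Az_n$. As $\wt\calL$ is $GA$-invariant, $GAz_n\in\wt\calL\subset\dom A$, so $\zeta_n\in\dom G$, $G\zeta_n=GAz_n\in\dom A$, whence $\zeta_n\in\dom(AG)$ and $AG\zeta_n=A(GAz_n)$ with $\|AG\zeta_n\|\le M\|GAz_n\|\to 0$. Moreover $\|\zeta_n\|\le M$, and $(G\zeta_n,\zeta_n)=(GAz_n,Az_n)$, so $|(G\zeta_n,\zeta_n)|\le M\|GAz_n\|\to 0$. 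If $\liminf_n\|\zeta_n\|>0$, then along a subsequence $\zeta_n/\|\zeta_n\|$ is an approximate eigensequence of $AG$ at $0$ with $(G\cdot\,,\cdot)\to 0$, contradicting $(\dagger)$, and we are done. The main obstacle is thus the remaining case $\|Az_n\|\to 0$, i.e.\ $A\restr\wt\calL$ not bounded below; I expect this to be settled by the symmetric construction — interchanging the roles of $AG$ and $GA$ and using the maximal spectral subspace of $AG$ near $0$, on which (again by the closed graph theorem) $G$ is bounded — to produce once more a sequence violating $(\dagger)$.
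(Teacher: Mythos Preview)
Your argument has a genuine gap, and the deeper issue is that the contradiction hypothesis $0\in\sap(G)$ is never actually used after you record it. The construction of $(z_n)\subset\wt\calL$ with $GAz_n\to 0$ rests only on $0\in\sap(GA)$, which already follows from the standing hypothesis $0\in\sap(AG)$ via $\sigma(AG)=\sigma(GA)$ and Corollary~\ref{c:sigs}; it has nothing to do with $G$ failing to be boundedly invertible. Consequently the dichotomy you set up --- either $\liminf\|Az_n\|>0$ (violating $(\dagger)$) or $\|Az_n\|\to 0$ along a subsequence --- is not anchored to the assumption you want to refute: the first branch is simply excluded by the true hypothesis $(\dagger)$, so all you have established is that every such $(z_n)$ satisfies $\liminf\|Az_n\|=0$, a consequence of $0\in\sp(AG)$ that carries no contradiction. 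The suggested ``symmetric construction'' on the maximal spectral subspace $\calL$ of $AG$ does not help either: there $(\dagger)$ says precisely that $(Gw_n,w_n)$ stays bounded away from zero for any approximate eigensequence $(w_n)$, so no violation arises, and you still have not touched a sequence witnessing $0\in\sap(G)$.

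The paper's proof is much shorter and uses that witness directly. Take $(x_n)\subset\dom G$ with $\|x_n\|=1$ and $Gx_n\to 0$, and set $y_n:=-\la_0(AG-\la_0)^{-1}x_n$. The intertwining relations of Proposition~\ref{p:basic} give $Gy_n=-\la_0(GA-\la_0)^{-1}Gx_n\to 0$ and $AGy_n=-\la_0A(GA-\la_0)^{-1}Gx_n\to 0$ (since $A(GA-\la_0)^{-1}\in L(\calH)$); combined with the algebraic identity $AGy_n=\la_0(y_n-x_n)$ this forces $\|y_n\|\to 1$. Hence $(y_n)$ is an approximate eigensequence of $AG$ at $0$ with $(Gy_n,y_n)\to 0$, contradicting Lemma~\ref{l:spsp}. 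No maximal spectral subspaces, closed graph arguments, or resolvent growth estimates are needed --- the resolvent $(AG-\la_0)^{-1}$ itself converts the $G$-approximate kernel sequence into an $AG$-approximate kernel sequence with the required sign behaviour.
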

\begin{proof}
Suppose that, e.g., $0\in\sp(AG)$ and that there exists a sequence $(x_n)\subset\dom G$ with $\|x_n\|=1$ for $n\in\N$ and $Gx_n\to 0$ as $n\to\infty$. Define
$$
y_n := -\la_0(AG - \la_0)^{-1}x_n\,\in\,\dom(GAG)
$$
as in the proof of Lemma \ref{l:spsp} (with $\la = 0$). Then $AGy_n = \la_0(y_n - x_n)$ and $AGy_n = -\la_0A(GA - \la_0)^{-1}Gx_n\to 0$ as $n\to\infty$ as $A(GA - \la_0)^{-1}$ is bounded. Therefore, $\|y_n\|\to 1$ and since $0\in\sp(AG)$, we conclude $\liminf_{n\to\infty}\,(Gy_n,y_n) > 0$ from Lemma \ref{l:spsp}. But this contradicts $Gy_n = -\la_0(GA - \la_0)^{-1}Gx_n\to 0$ as $n\to\infty$.
\end{proof}

\begin{lem}\label{l:krein}
Assume that \eqref{e:ass} is satisfied. Let $\calL\subset\dom AG$ be a closed subspace such that $AG\calL\subset\calL$ and $0\in\rho(AG|\calL)$. If $\calH = \calL + \calL^\gperp$, then $(\calL,\product)$ is a Krein space.
\end{lem}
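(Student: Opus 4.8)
The plan is to show that the Gram operator of $\calL$ with respect to $\product$ is boundedly invertible. Write $T := AG\restr\calL$; since $AG$ is closed by \eqref{e:ass} and $\calL$ is closed, the closed graph theorem gives $T\in L(\calL)$, and $0\in\rho(T)$ by hypothesis, so $\sigma(T)$ is a compact subset of $\C\setminus\{0\}$. Let $P_\calL$ be the orthogonal projection onto $\calL$ in $(\calH,\hproduct)$; then $G_\calL := P_\calL G_0\restr\calL\in L(\calL)$ is the Gram operator, it is selfadjoint in the Hilbert space $\calL$, and the assertion is equivalent to $0\in\rho(G_\calL)$.

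First I would prove that the sum $\calH = \calL + \calL^\gperp$ is direct and carried by a bounded $\product$-symmetric projection. The isotropic subspace $\calL_0 := \calL\cap\calL^\gperp$ is $AG$-invariant (for $x\in\calL_0$, $\ell\in\calL$ one has $[AGx,\ell] = [x,AG\ell] = 0$ since $AG\ell\in\calL$), and using $\product$-symmetry of $T$ one checks $T^{-1}\calL_0\subset\calL_0$, so $0\in\rho(AG\restr\calL_0)$. On the other hand $\calL^\gperp = (G_0\calL)^\perp$, so taking orthogonal complements in $\calH = \calL + \calL^\gperp$ yields $\ol{G_0\calL}\cap\calL^\perp = \{0\}$, whence $G_0\calL_0\subset G_0\calL\cap\calL^\perp = \{0\}$, i.e.\ $\calL_0\subset\ker G_0 = \ker G$ (the last equality holds because $\ker G\subset\ker(AG)$ and $G_0 = G(AG-\la_0)^{-1}(AG-\ol{\la_0})^{-1}$). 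But $AG$ vanishes on $\calL_0\subset\ker G$, which together with $0\in\rho(AG\restr\calL_0)$ forces $\calL_0 = \{0\}$. Hence $\calH = \calL\,[\ds]\,\calL^\gperp$, and the corresponding projection $P$ onto $\calL$ is bounded and $\product$-symmetric.

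Next I would show $\la_0,\ol{\la_0}\in\rho(T)$. For $x\in\dom AG$ one has $Px\in\calL\subset\dom AG$ and $x-Px\in\calL^\gperp\cap\dom AG$; since $[AG(x-Px),\ell] = [x-Px,AG\ell] = 0$ for all $\ell\in\calL$, we get $AG(x-Px)\in\calL^\gperp$, while $AGPx\in\calL$, so $P(AGx) = AGPx$. Thus $P$ commutes with $AG$, hence with $(AG-\la_0)^{-1}$ and $(AG-\ol{\la_0})^{-1}$; consequently $\calL = \ran P$ is invariant under these resolvents, and being also invariant under $AG-\la_0$ and $AG-\ol{\la_0}$ it follows that $\la_0,\ol{\la_0}\in\rho(T)$. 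In particular, for $u\in\calL$ the element $(AG-\la_0)^{-1}(AG-\ol{\la_0})^{-1}u = (T-\la_0)^{-1}(T-\ol{\la_0})^{-1}u$ lies in $\calL$, so $G_0\restr\calL = (G\restr\calL)(T-\la_0)^{-1}(T-\ol{\la_0})^{-1}$, and $\product$-symmetry of $T$ then gives the operator identity $(T-\la_0)^*G_\calL(T-\la_0) = P_\calL G\restr\calL$ on $\calL$. Since $T-\la_0$ is boundedly invertible, $G_\calL$ is boundedly invertible iff $B := P_\calL G\restr\calL\in L(\calL)$ is; and $B$ is selfadjoint with $\ker B = (T-\la_0)^{-1}\ker G_\calL = (T-\la_0)^{-1}\calL_0 = \{0\}$, i.e.\ injective.

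The remaining step — that $B$ has closed range — is the heart of the matter, and the place where $\rho(GA)\neq\emptyset$ is genuinely used: dropping it, the conclusion fails (take $\calH = \ell^2$, $G = \operatorname{diag}(2/n)$, $A = \operatorname{diag}(n/2)$, $\calL = \calH$; then $AG = I$ and $\rho(AG)\neq\emptyset$, but $G_0 = \operatorname{diag}(1/n)$ is not boundedly invertible and $\rho(GA) = \emptyset$). Using $\la_0,\ol{\la_0}\in\rho(GA)$ and the boundedness of $G(AG-\la_0)^{-1}$ and $A(GA-\la_0)^{-1}$ from Proposition \ref{p:basic}, one sees that $G\calL = G_0\calL$ is contained in $\dom(GA)$ and is $GA$-invariant, with $GA\restr G\calL$ similar — via $G\restr\calL$ — to $T$; the aim is to upgrade this to $\ol{G\calL}\subset\dom A$, so that $A\restr\ol{G\calL}$ is bounded by the closed graph theorem, which forces $\|Gu\|\geq\delta\|u\|$ on $\calL$ via $\|Tu\| = \|AGu\|\leq\|A\restr\ol{G\calL}\|\,\|Gu\|$ and $\|Tu\|\geq c\|u\|$ (here the compactness of $\sigma(T)$ and $0\in\rho(T)$ enter). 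Once $G\restr\calL$, equivalently $G_0\restr\calL$, is bounded below, the positive angle between the complementary closed subspaces $\ol{G_0\calL}$ and $\calL^\perp$ (which is exactly $\calH = \ol{G_0\calL}\,[\ds]\,\calL^\perp$, obtained by adjoining $P^*$ to the decomposition above) gives $\|G_\calL u\| = \|P_\calL G_0 u\|\geq c'\|G_0 u\|\geq c'\delta'\|u\|$ on $\calL$; hence $B$, and therefore $G_\calL$, is boundedly invertible and $(\calL,\product)$ is a Krein space. I expect the delicate point to be exactly this closed-range claim — converting "$GA$ is closed with $\rho(GA)\neq\emptyset$" into a uniform lower bound for $G$ on $\calL$.
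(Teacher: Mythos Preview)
Your approach mirrors the paper's: both reduce to showing that the Gram operator $G_\calL = P_\calL G_0\restr\calL$ is boundedly invertible, first disposing of $\ker G_\calL$ and then establishing a lower bound. Your treatment of injectivity (via $G_0\calL_0\subset\ol{G_0\calL}\cap\calL^\perp=\{0\}$, hence $\calL_0\subset\ker G$, hence $T\restr{\calL_0}=0$ while $0\in\rho(T\restr{\calL_0})$) is correct and equivalent to the paper's. The construction of the bounded projection $P$, the commutation $PAG\subset AGP$, the resolvent-invariance of $\calL$ and hence $\la_0,\ol{\la_0}\in\rho(T)$, the factorization $G_\calL = B(T-\la_0)^{-1}(T-\ol{\la_0})^{-1}$, and the $P^*$-estimate $\|G_\calL u\|\ge\|P\|^{-1}\|G_0u\|$ are all correct and are, up to cosmetic rearrangement, the same devices the paper uses.

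The gap is exactly where you flag it: you do not prove the ``upgrade'' $\ol{G\calL}\subset\dom A$. Moreover, the order you sketch is inverted --- one cannot first get $\ol{G\calL}\subset\dom A$ and \emph{then} invoke the closed graph theorem; one must bound $A\restr{G\calL}$ directly, and closedness of $A$ then extends the bound to the closure. The paper supplies this missing bound in one line, using ingredients you already have in hand. For $y = G_0\ell\in G_0\calL = G\calL$ (with $\ell\in\calL$) one has $y\in\dom A$ and $Ay = T(T-\la_0)^{-1}(T-\ol{\la_0})^{-1}\ell\in\calL$; hence, using the $(AG-\la_0)^{-1}$-invariance of $\calL$ and the intertwining $(AG-\la_0)^{-1}A\subset A(GA-\la_0)^{-1}$ from Proposition~\ref{p:basic},
\[
\|Ay\| \;=\; \big\|(T-\la_0)\,(AG-\la_0)^{-1}Ay\big\| \;\le\; \|T-\la_0\|\,\big\|A(GA-\la_0)^{-1}\big\|\,\|y\|.
\]
This is precisely where $\rho(GA)\neq\emptyset$ enters (through the boundedness of $A(GA-\la_0)^{-1}$), confirming the relevance of your $\ell^2$ counterexample. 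With $\|A(Gu)\|\le C\|Gu\|$ and $\|Tu\|\ge c\|u\|$ your chain yields $\|Gu\|\ge(c/C)\|u\|$ on $\calL$, hence $G_0\restr\calL$ is bounded below, and your $P^*$-argument then finishes the proof.
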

\begin{proof}
Let $P_\calL$ be the orthogonal projection (with respect to $\hproduct$) onto $\calL$ in $\calH$. Then, with $G_\calL := P_\calL (G_0|\calL)\in L(\calL)$ we have
$$
[\ell_1,\ell_2] = (G_\calL\ell_1,\ell_2) \quad\text{for }\;\ell_1,\ell_2\in\calL.
$$
Hence, $(\calL,\product)$ is a Krein space if and only if $G_\calL$ is boundedly invertible. Let $\ell\in\ker G_\calL$. By assumption, for any $x\in\calH$ we find $x_1\in\calL$ and $x_2\in\calL^\gperp$ such that $x = x_1 + x_2$. It follows that
$$
(G_0\ell,x) = [\ell,x_1+x_2] = [\ell,x_1] = (G_\calL\ell,x_1) = 0,
$$
and thus $G_0\ell = 0$. From
$$
0 = G(AG - \la_0)^{-1}(AG - \ol{\la_0})^{-1}\ell = (GA - \la_0)^{-1}(GA - \ol{\la_0})^{-1}G\ell
$$
we conclude $G\ell = 0$ and hence $AG\ell = 0$ which implies $\ell = 0$ as $0\in\rho(AG|\calL)$. Therefore we have $\calH = \calL [\ds] \calL^\gperp$ (since $\ker G_\calL = \calL\cap\calL^\gperp$).

Now, suppose that there exists a sequence $(\ell_n)\subset\calL$ with $\|\ell_n\|=1$ and $\|G_\calL\ell_n\| \to 0$ as $n\to\infty$. If by $P$ we denote the ($G_0$-symmetric) projection onto $\calL$ with $\ker P = \calL^\gperp$, we obtain
\begin{align*}
\|G_0\ell_n\|^2
&= (G_0\ell_n,PG_0\ell_n) + (G_0\ell_n,(I - P)G_0\ell_n)\\
&= (P_\calL G_0\ell_n, PG_0\ell_n) + [\ell_n, (I - P)G_0\ell_n]\\
&= (G_\calL\ell_n,PG_0\ell_n)\\
&\le \|G_\calL \ell_n\|\cdot\|P\|\cdot\|G_0\|.
\end{align*}
Hence, $G_0\ell_n\to 0$ as $n\to\infty$. It is easy to see that $\calL^\gperp$ is $AG$-invarant. Hence, $\calL$ is $(AG - \la_0)^{-1}$-invariant. And since $AG|\calL$ is bounded, we conclude
\begin{align*}
\|AG_0\ell_n\|
&\le \|(AG - \la_0)|\calL\|\cdot\|(AG - \la_0)^{-1}AG_0\ell_n\|\\
&= \|(AG - \la_0)|\calL\|\cdot\|A(GA - \la_0)^{-1}G_0\ell_n\|\\
&\le \|(AG - \la_0)|\calL\|\cdot\|A(GA - \la_0)^{-1}\|\cdot\|G_0\ell_n\|.
\end{align*}
Thus, we have $(AG - \la_0)^{-1}(AG - \ol{\la_0})^{-1}AG\ell_n = AG_0\ell_n\to 0$, which implies $AG\ell_n\to 0$ as $n\to\infty$, which is a contradiction to $0\in\rho(AG|\calL)$. The lemma is proved.
\end{proof}

\begin{prop}\label{p:iso}
Assume that \eqref{e:ass} is satisfied. Then for each $\la\in\C$ the following statements hold.
\begin{enumerate}
\item[{\rm (i)}]  If $\la\neq 0$ is an isolated point of the spectrum of $AG$ {\rm (}and hence also $\ol\la${\rm )}, then the inner product space $(E(AG;\{\la,\ol\la\})\calH,\product)$ is a Krein space.
\item[{\rm (ii)}] If $\la$ is a pole of the resolvent of $AG$ of order $\nu$ then $\ol\la$ is a pole of the resolvent of $AG$ of order $\nu$.
\end{enumerate}
\end{prop}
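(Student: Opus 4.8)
The strategy is to obtain (i) as an application of Lemma~\ref{l:krein} and then to read off (ii) from (i) together with Lemma~\ref{l:E}. Throughout, write $E := E(AG;\{\la,\ol\la\})$ and $\calL := E\calH$; recall that $E = E(AG;\la) + E(AG;\ol\la)$ when $\la$ is non-real, and $E = E(AG;\la)$ when $\la\in\R$.

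For (i): since $\la\ne 0$ and, by Corollary~\ref{c:sigs}(i), $\ol\la$ is also an isolated point of $\sigma(AG)$, a standard Riesz-projection argument (passing to the bounded operator $(AG-\la_0)^{-1}$) shows that $\calL$ is a closed $AG$-invariant subspace with $\calL\subset\dom AG$, that $AG|\calL$ is bounded, and that $\sigma(AG|\calL) = \{\la,\ol\la\}$; in particular $0\in\rho(AG|\calL)$. It then remains to check the hypothesis $\calH = \calL + \calL^\gperp$ of Lemma~\ref{l:krein}. Applying Lemma~\ref{l:E} to the pair $(\la,\ol\la)$ and to the pair $(\ol\la,\la)$ and adding the two resulting identities shows that $E$ is $G_0$-symmetric, i.e.\ $[Ex,y] = [x,Ey]$ for all $x,y\in\calH$. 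Hence, for $\ell\in\calL$ and $m\in\ker E$,
\[
[\ell,m] = [E\ell,m] = [\ell,Em] = 0 ,
\]
so that $\ker E\subset\calL^\gperp$; since $\calH = \calL + \ker E$, this gives $\calH = \calL + \calL^\gperp$, and Lemma~\ref{l:krein} yields that $(\calL,\product)$ is a Krein space.

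For (ii): if $\la\in\R$ there is nothing to prove, so assume $\la\notin\R$. Then $\la$ and $\ol\la$ are isolated in $\sigma(AG)$, and by (i) the space $(\calL,\product)$ is a Krein space, so in particular $\product$ is non-degenerate on $\calL$. Recall that $\la$ is a pole of order $\nu$ of the resolvent of $AG$ if and only if $(AG-\la)^\nu E(AG;\la) = 0$ while $(AG-\la)^{\nu-1}E(AG;\la)\ne 0$, and likewise at $\ol\la$; thus it suffices to prove, for every integer $k\ge 0$, that
\[
(AG-\la)^k E(AG;\la) = 0 \iff (AG-\ol\la)^k E(AG;\ol\la) = 0 .
\]
I would do this by establishing, by induction on $k$, the identity
\[
\big[(AG-\la)^k E(AG;\la)x,\,y\big] = \big[x,\,(AG-\ol\la)^k E(AG;\ol\la)y\big]\qquad(x,y\in\calL),
\]
where the base case $k=0$ is Lemma~\ref{l:E} and the inductive step uses only that $AG$ is $G_0$-symmetric, that $\calL\subset\dom AG$ is $AG$-invariant, and that $E(AG;\la)\calH,\,E(AG;\ol\la)\calH\subset\calL$ (so that all the vectors occurring lie in $\dom AG$ and the factor $AG-\la$ may be shifted to the other side of $\product$, turning into $AG-\ol\la$). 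Granting the identity: if $(AG-\la)^k E(AG;\la) = 0$, then $[x,(AG-\ol\la)^k E(AG;\ol\la)y] = 0$ for all $x,y\in\calL$; since $(AG-\ol\la)^k E(AG;\ol\la)y\in\calL$ and $\product$ is non-degenerate on $\calL$, this forces $(AG-\ol\la)^k E(AG;\ol\la) = 0$ on $\calL$, hence on all of $\calH$ because $E(AG;\ol\la) = E(AG;\ol\la)E$. The converse implication follows by interchanging $\la$ and $\ol\la$, and comparing the least such $k$ shows that $\ol\la$ is again a pole, of the same order $\nu$.

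The main obstacle is that the inner product $\product$ may be degenerate on $\calH$, so globally one cannot conclude "$E$ is a $\product$-orthogonal projection" nor "$G_0B = 0$ implies $B = 0$"; the argument is arranged so that the single place where non-degeneracy is genuinely used is inside the Krein space $\calL$ supplied by part (i). The only point requiring a little care is the inclusion $\ker E\subset\calL^\gperp$ in (i) — precisely the hypothesis needed to invoke Lemma~\ref{l:krein} — and, in (ii), checking in the induction that every vector produced remains in $\calL$.
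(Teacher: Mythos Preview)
Your proof is correct and follows essentially the same route as the paper: for (i) both arguments use Lemma~\ref{l:E} to see that $E$ is $\product$-symmetric, hence $(I-E)\calH\subset\calL^\gperp$, and then invoke Lemma~\ref{l:krein}; for (ii) both combine the $G_0$-symmetry of $AG$ with Lemma~\ref{l:E} and then use the non-degeneracy of $\product$ on $\calL$ supplied by (i) to conclude. The only cosmetic difference is that the paper dispenses with the induction in (ii) by computing $[(AG-\ol\la)^\nu x,y]$ directly for $x\in E(AG;\ol\la)\calH$ and $y$ in each of the two summands $E(AG;\la)\calH$, $E(AG;\ol\la)\calH$ of~$\calL$.
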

\begin{proof}
For the proof of (i) set $E := E(AG;\{\la,\ol\la\})$. As $E$ is $\product$-symmetric by Lemma \ref{l:E}, it follows that $(I - E)\calH\subset (E\calH)^\gperp$. And since $\calH = E\calH\ds(I - E)\calH$, Lemma \ref{l:krein} yields the assertion.

By \cite[Theorem VII.3.18]{ds} the fact that $\la\notin\R$ (the statement for $\la\in\R$ is trivial) is a pole of the resolvent of $AG$ of order $\nu$ is equivalent to
$$
(AG - \la)^\nu E(AG;\la) = 0 \;\text{ and }\; (AG - \la)^{\nu - 1}E(AG;\la)\neq 0.
$$
Let $x,v\in E(AG;\ol\la)\calH$ be arbitrary. From Lemma \ref{l:E} we obtain
\begin{align*}
[(AG - \ol\la)^\nu x,v]
&= [E(AG;\ol\la)(AG - \ol\la)^\nu x, v]\\
&= [(AG - \ol\la)^\nu x,E(AG;\la)v] = 0.
\end{align*}
Furthermore, for $u\in E(AG;\la)\calH$ we have
$$
[(AG - \ol\la)^\nu x,u] = [x,(AG - \la)^\nu u] = 0.
$$
Hence, $[(AG - \ol\la)^\nu x,y] = 0$ for all $y\in E(AG;\{\la,\ol\la\})\calH$. But $(E(AG;\{\la,\ol\la\})\calH,\product)$ is a Krein space by (i), and we obtain $(AG - \ol\la)^\nu x = 0$.
\end{proof}

\begin{prop}\label{p:bounded_non-negative}
Let $A_0$ be a bounded selfadjoint operator in $\calH$ and assume that $G_0A_0G_0\ge 0$. Then the following statements hold for the bounded  $G_0$-symmetric operator $A_0G_0$:
\begin{enumerate}
\item[{\rm (i)}]   $\sigma(A_0G_0)\subset\R$\,,
\item[{\rm (ii)}]  $(0,\infty)\cap\sigma(A_0G_0)\subset\sp(A_0G_0)$,
\item[{\rm (iii)}] $(-\infty,0)\cap\sigma(A_0G_0)\subset\sm(A_0G_0)$.
\end{enumerate}
\end{prop}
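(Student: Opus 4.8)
The plan is to work throughout with the identity
\[
[A_0G_0 x, x] = (G_0A_0G_0\,x, x)\ge 0,\qquad x\in\calH,
\]
and, for an \emph{approximate eigensequence} of $A_0G_0$ at $\la$ --- that is, $\|x_n\|=1$ with $(A_0G_0-\la)x_n\to 0$, so $A_0G_0 x_n = \la x_n + r_n$ with $r_n\to 0$ --- with its immediate consequence
\[
0\le (G_0A_0G_0\,x_n, x_n) = [A_0G_0 x_n, x_n] = \la\,[x_n,x_n] + [r_n, x_n] = \la\,[x_n,x_n] + o(1).
\]
One further elementary fact will be used repeatedly: for a bounded operator $C\ge 0$ one has $\|Cx\|^2\le\|C\|\,(Cx,x)$ (write $C=C^{1/2}C^{1/2}$ and apply Cauchy--Schwarz), so that $(Cx_n,x_n)\to 0$ forces $Cx_n\to 0$; I would apply this with $C=G_0A_0G_0$.

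First I would prove $\sap(A_0G_0)\subset\R$. Let $\la\in\sap(A_0G_0)$ with an approximate eigensequence $(x_n)$. Since $(G_0A_0G_0 x_n,x_n)$ is real, comparing imaginary parts in the second display gives $(\Im\la)\,[x_n,x_n]\to 0$; if $\Im\la\ne 0$ this forces $[x_n,x_n]\to 0$, hence $(G_0A_0G_0 x_n,x_n)\to 0$, hence $G_0A_0G_0 x_n\to 0$. Applying $G_0$ to $A_0G_0 x_n = \la x_n + r_n$ gives $G_0A_0G_0 x_n = \la G_0 x_n + G_0 r_n$, so $\la G_0 x_n\to 0$ and, as $\la\ne 0$, $G_0 x_n\to 0$; then $A_0G_0 x_n\to 0$ and finally $\la x_n\to 0$, contradicting $\|x_n\|=1$. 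To extend this to all of $\sigma(A_0G_0)$, and thereby obtain (i): if $\la\in\sigma(A_0G_0)\setminus\sap(A_0G_0)$ then $A_0G_0-\la$ is bounded below, hence injective with closed, proper range, so $\ol\la\in\sigma_p\big((A_0G_0)^*\big)=\sigma_p(G_0A_0)$; picking $x\ne 0$ with $G_0A_0 x=\ol\la x$ and applying $A_0$ yields $A_0G_0(A_0 x)=\ol\la(A_0 x)$, and for $\la\ne 0$ necessarily $A_0 x\ne 0$ (otherwise $\ol\la x=G_0A_0 x=0$ gives $x=0$), so $\ol\la\in\sigma_p(A_0G_0)\subset\sap(A_0G_0)\subset\R$; the case $\la=0$ is trivial. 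In particular every nonzero point of $\sigma(A_0G_0)$ lies in $\sap(A_0G_0)$.

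It then suffices to prove (ii), the proof of (iii) being identical up to sign changes. Let $\la\in(0,\infty)\cap\sigma(A_0G_0)$; by the previous paragraph $\la\in\sap(A_0G_0)$, so fix an approximate eigensequence $(x_n)$. The second display gives $\la\,[x_n,x_n]\ge o(1)$, whence $\liminf_n [x_n,x_n]\ge 0$ because $\la>0$. Suppose $\liminf_n [x_n,x_n]=0$; after passing to a subsequence, $[x_n,x_n]\to 0$, so $(G_0A_0G_0 x_n,x_n)=\la[x_n,x_n]+o(1)\to 0$, so $G_0A_0G_0 x_n\to 0$, so (arguing exactly as above) $\la G_0 x_n\to 0$, $G_0 x_n\to 0$, $A_0G_0 x_n\to 0$, and $\la x_n\to 0$ --- contradicting $\|x_n\|=1$. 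Hence $\liminf_n [x_n,x_n]>0$, i.e.\ $\la\in\sp(A_0G_0)$.

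I do not expect a genuine obstacle here: the argument is a short chain of norm estimates driven entirely by the single inequality $G_0A_0G_0\ge 0$. The only slightly delicate points are the bookkeeping that reduces $\sigma(A_0G_0)\setminus\{0\}$ to $\sap(A_0G_0)$ (so that the notions $\sp$ and $\sm$ apply to it at all) and making sure $\la$ is never divided out when it could be $0$ --- both handled by treating $\la=0$ separately.
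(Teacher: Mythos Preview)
Your proof is correct and follows essentially the same route as the paper's: both hinge on the observation that if $(G_0A_0G_0x_n,x_n)\to 0$ for an approximate eigensequence at $\la\neq 0$, then the Cauchy--Schwarz bound for the non-negative operator $G_0A_0G_0$ forces $G_0A_0G_0x_n\to 0$, whence $G_0x_n\to 0$, $A_0G_0x_n\to 0$, and finally $\la x_n\to 0$, a contradiction. The only cosmetic differences are that the paper isolates this contradiction as a preliminary claim and then applies it twice, while you rerun the chain in each part; and the paper cites Corollary~\ref{c:sigs}(ii) for the passage from $\sap$ to $\sigma$, whereas you spell out that short argument explicitly.
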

\begin{proof}
Let $\la\in\sap(A_0G_0)\setminus\{0\}$ and let $(x_n)\subset\calH$ with $\|x_n\| = 1$, $n\in\N$, and $(A_0G_0 - \la)x_n\to 0$ as $n\to\infty$. We claim that it is not possible that $\lim_{n\to\infty}\,(G_0A_0G_0x_n,x_n) = 0$. Suppose the contrary. Then, from the Cauchy-Bun\-ya\-kowski inequality we obtain
\begin{align*}
\|G_0A_0G_0x_n\|^2 \le (G_0A_0G_0x_n,x_n)((G_0A_0G_0)^2x_n,G_0A_0G_0x_n),
\end{align*}
and hence $G_0A_0G_0x_n\to 0$ as $n\to\infty$. As $(A_0G_0 - \la)x_n\to 0$, this implies $G_0x_n\to 0$ and hence $A_0G_0x_n\to 0$ as $n\to\infty$. A contradiction.

Assume that there exists $\la\in\sap(A_0G_0)\setminus\R$. Then there exists $(x_n)\subset\calH$ with $\|x_n\| = 1$ and $(A_0G_0 - \la)x_n\to 0$ as $n\to\infty$. Since $[A_0G_0x_n,x_n] - \la [x_n,x_n]$ tends to zero as $n\to\infty$ and $[A_0G_0x_n,x_n]$ and $[x_n,x_n]$ both are real for each $n$, it follows from $\la\notin\R$ that $[A_0G_0x_n,x_n]$ tends to zero which contradicts the statement proved above. Hence $\sap(A_0G_0)\setminus\R = \emptyset$, and from Corollary \ref{c:sigs}(ii) we obtain $\sigma(A_0G_0)\subset\R$.

Let $\la\in\sigma(A_0G_0)$, $\la > 0$. Then $\la\in\sap(A_0G_0)$ by Corollary \ref{c:sigs}(ii). Let $(x_n)\subset\calH$ with $\|x_n\| = 1$ and $(A_0G_0 - \la)x_n\to 0$ as $n\to\infty$. Suppose $\liminf_{n\to\infty}\,[x_n,x_n]\le 0$. Then from
$$
\la\liminf_{n\to\infty}\,[x_n,x_n] = \liminf_{n\to\infty}\,[(\la - A_0G_0)x_n,x_n] + (G_0A_0G_0x_n,x_n)\ge 0
$$
it is seen that there exists a subsequence $(x_{n_k})$ such that $(G_0A_0G_0x_{n_k},x_{n_k})$ tends to zero as $k\to\infty$. But this is a contradiction to the statement proved above, and it follows that
$$
\liminf_{n\to\infty}\,[x_n,x_n] > 0.
$$
This shows (ii), and (iii) can be shown similarly.
\end{proof}

\begin{rem}
In the above proof the special representation $G_0 = G(AG - \la_0)^{-1}(AG - \ol{\la_0})^{-1}$ of $G_0$ was not used. Therefore, Proposition \ref{p:bounded_non-negative} also holds for arbitrary bounded selfadjoint operators $G_0$ in $\calH$.
\end{rem}

As a corollary of Proposition \ref{p:bounded_non-negative} we give another proof of a theorem of Radjavi and Rosenthal (see \cite[Proposition 6.8]{rr}). Recall that a closed subspace is hyperinvariant for $T\in L(X)$, $X$ a Banach space, if it is invariant for any operator in $L(X)$ which commutes with $T$.

\begin{cor}
Let $S,T\in L(\calH)$ be selfadjoint such that $STS\ge 0$. If $TS$ is not a constant multiple of the identity, then $TS$ has a non-trivial hyperinvariant subspace.
\end{cor}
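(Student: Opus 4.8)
The plan is to apply Proposition \ref{p:bounded_non-negative} (and the remark following it, which permits an arbitrary bounded selfadjoint $G_0$) with $A_0:=T$ and $G_0:=S$, so that $G_0A_0G_0=STS\ge 0$; this yields that $TS$ is $S$-symmetric, that $\sigma(TS)$ is a nonempty compact subset of $\R$, and that $(0,\infty)\cap\sigma(TS)\subset\sp(TS)$ while $(-\infty,0)\cap\sigma(TS)\subset\sm(TS)$. Two mechanisms will then produce hyperinvariant subspaces. First, for a spectral set $\sigma_0$ of $TS$ the Riesz--Dunford projection $E(TS;\sigma_0)$ commutes with every $B\in L(\calH)$ for which $B(TS)=(TS)B$, since such $B$ commutes with all resolvents of $TS$ and hence with the contour integral defining $E(TS;\sigma_0)$. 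Second, if $TS$ has a local spectral function $E$ on an interval of positive or negative type, obtained from Theorem \ref{t:lsf}, then each value $E(\Delta)$ commutes with every such $B$ by property (S3). In addition, $\ker(TS)$ is invariant under every $B$ commuting with $TS$. Since $TS$ is not a scalar multiple of $I$, in particular $TS\ne 0$, the proof now splits according to $\sigma(TS)$.

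If $\sigma(TS)$ is disconnected, write it as a disjoint union $\sigma_1\cup\sigma_2$ of nonempty compact sets; then $\ran E(TS;\sigma_1)$ is a nontrivial hyperinvariant subspace. Suppose next $\sigma(TS)=[a,b]$ with $a<b$. Then $(a,b)$ contains some point $\ne 0$; replacing $S$ by $-S$ if necessary -- which keeps $STS\ge 0$, replaces $TS$ by $-TS$, and does not change the commutant of $TS$ -- we may assume, after relabelling, that $\sigma(TS)=[a,b]$ with $a<b$ and that there is $\epsilon\in(a,b)$ with $\epsilon>0$. The interval $J:=(0,\infty)$ is of positive type with respect to $TS$, and $\C\setminus\R\subset\rho(TS)$ because $\sigma(TS)\subset\R$, so Theorem \ref{t:lsf} supplies a local spectral function $E$ of $TS$ on $J$. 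For $\Delta:=[\epsilon,b]\in\mathfrak R(J)$ the projection $E(\Delta)$ commutes with $TS$ (apply (S3) with $B=TS$), and (S4)--(S5) give $\sigma(TS|E(\Delta)\calH)\subset[\epsilon,b]$ and $\sigma(TS|(I-E(\Delta))\calH)\subset[a,\epsilon]$. Since $a\in\sigma(TS)$ lies strictly below $\epsilon$ and $b\in\sigma(TS)$ strictly above it, neither $E(\Delta)=0$ nor $E(\Delta)=I$ is possible, so $E(\Delta)\calH$ is a nontrivial hyperinvariant subspace of $TS$.

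It remains to treat $\sigma(TS)=\{\mu\}$. Suppose first $\mu\ne 0$; after replacing $S$ by $-S$ we may assume $\mu>0$, so $\{\mu\}=(0,\infty)\cap\sigma(TS)\subset\sp(TS)$ and $J:=(0,\infty)\supset\{\mu\}$ is of positive type. By Theorem \ref{t:lsf} there is a local spectral function $E$ of $TS$ on $J$, and since $\sigma(TS|\calH)=\{\mu\}$ the whole space $\calH$ is the maximal spectral subspace of $TS$ corresponding to the compact set $\{\mu\}$; hence Theorem \ref{t:lsf}(iii) gives $E(\{\mu\})\calH=\calH$ and then (i) shows $(Sx,x)\ge\delta\|x\|^2$ for all $x$ and some $\delta>0$, so $S$ is uniformly positive and in particular boundedly invertible. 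Then $S^{1/2}(TS)S^{-1/2}=S^{1/2}TS^{1/2}$ is selfadjoint and similar to $TS$, hence has spectrum $\{\mu\}$, hence equals $\mu I$, whence $TS=\mu I$, contradicting the hypothesis; so this subcase does not occur. Suppose finally $\mu=0$. Since $TS\ne 0$, the subspace $\ker(TS)$ is a proper hyperinvariant subspace, so it suffices to show $\ker(TS)\ne\{0\}$. Assume $\ker(TS)=\{0\}$. Then $\ker S\subset\ker(TS)=\{0\}$, so $\ran S$ is dense; as $(Tz,z)=(TSy,Sy)=(STSy,y)\ge 0$ for $z=Sy\in\ran S$ and $T$ is bounded, $T\ge 0$. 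The selfadjoint operator $R:=T^{1/2}ST^{1/2}$ satisfies $\sigma(R)\setminus\{0\}=\sigma(ST)\setminus\{0\}=\sigma(TS)\setminus\{0\}=\emptyset$, so $R=0$; polarizing the identity $(ST^{1/2}x,T^{1/2}x)=0$ shows that $S$ maps $\overline{\ran T}=\overline{\ran T^{1/2}}$ into $(\overline{\ran T^{1/2}})^{\perp}=\ker T$, whence $TSx=0$ for all $x\in\overline{\ran T}$, i.e.\ $\overline{\ran T}\subset\ker(TS)=\{0\}$ and $T=0$ -- contradicting $TS\ne 0$. Therefore $\ker(TS)\ne\{0\}$, as required. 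The step I expect to be the main obstacle is precisely this last one, the quasinilpotent case $\sigma(TS)=\{0\}$, where the structural hypothesis $STS\ge 0$ must be squeezed to force $TS$ to have a nontrivial kernel; the remaining cases are routine applications of Theorem \ref{t:lsf} and of the commutation properties of Riesz projections.
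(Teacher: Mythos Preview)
Your proof is correct and follows the same overall strategy as the paper: invoke Proposition~\ref{p:bounded_non-negative} together with Theorem~\ref{t:lsf} when $\sigma(TS)\neq\{0\}$, and use the selfadjoint operator $T^{1/2}ST^{1/2}$ to reach a contradiction when $\sigma(TS)=\{0\}$. The paper compresses the first part into a single sentence (``the assertion follows from Proposition~\ref{p:bounded_non-negative} and Theorem~\ref{t:lsf}''), whereas you spell out the case distinction and, in particular, handle the singleton case $\sigma(TS)=\{\mu\}$, $\mu\neq 0$, explicitly by showing that then $(\calH,(S\cdot,\cdot))$ is a Hilbert space and hence $TS=\mu I$; this edge case is arguably glossed over in the paper's terse formulation. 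For $\sigma(TS)=\{0\}$ the paper reduces to the situation where \emph{both} $S$ and $T$ are injective (otherwise exhibiting $\ker(TS)$ or $\overline{\ran TS}$ as a nontrivial hyperinvariant subspace) and derives a contradiction from $T^{1/2}ST^{1/2}=0$; you instead work only with $\ker(TS)$, assume it is zero, deduce $T\ge 0$ and $T^{1/2}ST^{1/2}=0$, and conclude $\overline{\ran T}\subset\ker(TS)=\{0\}$, hence $T=0$. Both routes are short and valid; yours avoids invoking the second hyperinvariant candidate $\overline{\ran TS}$ at the cost of a slightly longer chain at the end.
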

\begin{proof}
If $\sigma(TS)\neq\{0\}$, then the assertion follows from Proposition \ref{p:bounded_non-negative} and Theorem \ref{t:lsf} (note that a maximal spectral subspace is hyperinvariant, cf.\ \cite[Proposition 2.3.2]{cf}). Hence, suppose that $\sigma(TS) = \{0\}$. It is no restriction to assume that $S$ and $T$ are injective. Otherwise, $\ker(TS)$ or $\ol{\ran TS} = \ker(ST)^\perp$ is hyperinvariant for $TS$ or $TS = 0$. Hence, $T$ is a non-negative operator and Proposition \ref{p:basic} yields $\sigma(T^{1/2}ST^{1/2}) = \{0\}$. But $T^{1/2}ST^{1/2}$ is selfadjoint and thus coincides with the zero operator. This yields $T = S = 0$, a contradiction.
\end{proof}

\section{Definitizable pairs of selfadjoint operators}\label{s:def}
In the following we extend the notion of definitizability of selfadjoint operators in Krein spaces to products (or pairs) of selfadjoint operators in a Hilbert space. As in the previous section let $A$ and $G$ be selfadjoint operators in the Hilbert space $(\calH,\hproduct)$. Again, if \eqref{e:ass} is satisfied for $A$ and $G$ we fix $\la_0\in\rho(AG)$, define the bounded selfadjoint operator $G_0$ as in \eqref{e:G0} and set $[\cdot,\cdot] := (G_0\cdot,\cdot)$.

\begin{defn}\label{d:def}
An ordered pair $(A,G)$ of selfadjoint operators is called {\em definitizable} if the resolvent sets of $AG$ and $GA$ are non-empty and if there exists a polynomial $p\neq 0$ with real coefficients such that
$$
(p(AG)x,Gx)\ge 0\quad\text{for all }x\in\dom(AG)^{\max\{1,d\}},
$$
where $d := \deg(p)$. The polynomial $p$ is called {\em definitizing} for $(A,G)$.
\end{defn}

If $G$ is bounded and boundedly invertible, then $AG$ is selfadjoint in the Krein space $(\calH,(G\cdot,\cdot))$ and Definition \ref{d:def} coincides with the definition of definitizability of the operator $AG$ in this Krein space. The next lemma shows that the definitizability of $(A,G)$ can also be expressed by means of the inner product $\product$.

\begin{lem}\label{l:indep}
Assume that \eqref{e:ass} is satisfied. Let $p\neq 0$ be a polynomial with real coefficients. Then the following statements are equivalent.
\begin{enumerate}
\item[{\rm (i)}]  $(A,G)$ is definitizable with definitizing polynomial $p$.
\item[{\rm (ii)}] $[p(AG)x,x]\ge 0$ holds for all $x\in\dom p(AG)$.
\end{enumerate}
\end{lem}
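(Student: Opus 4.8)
Write $T:=AG$. The plan is to reduce both (i) and (ii) to a single transport identity between the two quadratic forms. By Proposition~\ref{p:basic} we have $T^*=GA$, and since $\la_0\notin\R$ (hence $\la_0\neq 0$) Proposition~\ref{p:basic} also gives $G(T-\la)^{-1}=\ol{(GA-\la)^{-1}G}$ for $\la\in\{\la_0,\ol{\la_0}\}$; in particular $B:=G(T-\ol{\la_0})^{-1}$ is everywhere defined (because $(T-\ol{\la_0})^{-1}\calH=\dom T\subset\dom G$) and closed (being the closed operator $G$ composed with a bounded operator), hence bounded, with $B^*=G(T-\la_0)^{-1}$. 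I will also use the intertwining inclusion $G\,p(AG)\subset p(GA)\,G$, obtained by iterating $G(AG)\subset(GA)G$. Set $d:=\deg p$. If $d=0$, then $p(T)=cI$ with $c\in\R\setminus\{0\}$, and the equivalence is immediate from the identity $(Gx,x)=[(T-\la_0)x,(T-\la_0)x]$ (established inside the proof of Lemma~\ref{l:spsp}) together with the fact that $T-\la_0$ maps $\dom T$ onto $\calH$. So assume $d\geq 1$; then $\dom p(T)=\dom T^{d}=\dom T^{\max\{1,d\}}$, so (i) and (ii) refer to the same domain.

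The heart of the argument is the identity
\[
[p(T)v,v]=(p(T)u,Gu),\qquad u:=(T-\la_0)^{-1}v,
\]
for every $v\in\dom p(T)$ (so that $u\in\dom T^{d+1}$). To establish it, put $w:=(T-\la_0)^{-1}(T-\ol{\la_0})^{-1}v\in\dom T^{d+2}$, so $v=(T-\la_0)(T-\ol{\la_0})w$ and $u=(T-\ol{\la_0})w$. Since $G_0=G(T-\la_0)^{-1}(T-\ol{\la_0})^{-1}$ and both resolvents commute with $p(T)$ on $\dom p(T)$, one gets $[p(T)v,v]=(G_0p(T)v,v)=(G\,p(T)w,\,(T-\la_0)(T-\ol{\la_0})w)$. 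Replacing $G\,p(T)w$ by $p(GA)\,Gw$ and pulling the outer factor $(T-\la_0)$ onto the left argument by the adjoint relation $((T-\la_0)\eta,\xi)=(\eta,(GA-\ol{\la_0})\xi)$, this becomes $\big((GA-\ol{\la_0})p(GA)Gw,\,(T-\ol{\la_0})w\big)$. Finally, using that $p(GA)$ commutes with $GA$, the elementary relation $(GA-\mu)Gz=G(AG-\mu)z$ valid for $z\in\dom T^{2}$, and the intertwining read backwards,
\[
(GA-\ol{\la_0})p(GA)Gw=p(GA)\,G(AG-\ol{\la_0})w=p(GA)\,Gu=G\,p(T)u,
\]
so $[p(T)v,v]=(G\,p(T)u,\,u)=(p(T)u,Gu)$, the last equality because $p(T)u\in\dom T\subset\dom G$ and $G=G^{*}$. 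Every domain condition invoked above holds because $w\in\dom T^{d+2}$ and $d\geq 1$; I expect this domain bookkeeping to be the most laborious, though entirely routine, part.

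Granting the identity, the implication (i)$\Rightarrow$(ii) is immediate: for $v\in\dom p(T)$ the vector $u=(T-\la_0)^{-1}v$ lies in $\dom T^{d+1}\subset\dom T^{\max\{1,d\}}$, so $(p(T)u,Gu)\geq 0$ by (i), and hence $[p(T)v,v]\geq 0$.

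The converse (ii)$\Rightarrow$(i) is where the real care is needed. The identity yields $(p(T)u,Gu)\geq 0$ for every $u$ in $(T-\la_0)^{-1}\dom p(T)=\dom T^{d+1}$, and it remains to pass from $\dom T^{d+1}$ to $\dom T^{d}$. I would argue by density and continuity. First, $(T-\la_0)^{d}$ is a topological isomorphism of $\dom T^{d}$, equipped with its graph norm, onto $\calH$, and it carries $\dom T^{d+1}$ onto $\dom T$, which is dense in $\calH$ since $T$ is densely defined (Proposition~\ref{p:basic}); hence $\dom T^{d+1}$ is dense in $\dom T^{d}$ for the graph norm. Second, the functional $u\mapsto(p(T)u,Gu)$ is continuous on $\dom T^{d}$ for that norm: from $B^{*}=G(T-\la_0)^{-1}$ one has $(p(T)u,Gu)=(B\,p(T)u,\,(T-\la_0)u)$ for all $u\in\dom T^{d}$, and since $B$ is bounded this is dominated by $\|B\|\,\|p(T)u\|\,\|(T-\la_0)u\|\leq C(\|u\|+\|T^{d}u\|)^{2}$ for a suitable constant $C$. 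Therefore the set $\{u\in\dom T^{d}:(p(T)u,Gu)\in[0,\infty)\}$ is closed and contains the dense subspace $\dom T^{d+1}$, so it equals $\dom T^{d}=\dom T^{\max\{1,d\}}$, which is exactly (i).
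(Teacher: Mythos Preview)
Your proof is correct and follows essentially the same route as the paper: both establish the transport identity $[p(T)v,v]=(p(T)u,Gu)$ with $u=(T-\la_0)^{-1}v$, deduce (i)$\Rightarrow$(ii) directly, and obtain (ii)$\Rightarrow$(i) first on $\dom T^{d+1}$ and then by density on $\dom T^{d}$. The differences are cosmetic---the paper derives the identity in a single line using $G_0(T-\la_0)x=(GA-\ol{\la_0})^{-1}Gx$ rather than via your auxiliary vector $w$ and the intertwining $Gp(AG)\subset p(GA)G$, and for the continuity step it uses the closed-graph bound $\|Gu\|\le c(\|u\|+\|AGu\|)$ in place of your bounded operator $B=G(T-\ol{\la_0})^{-1}$.
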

\begin{proof}
Let $d$ be the degree of $p$. If (i) holds and $y\in\dom(AG)^d$, then with $x := (AG - \la_0)^{-1}y\in\dom(AG)^{d+1}$ we have
\begin{align*}
[p(AG)y,y]
&= (p(AG)(AG - \la_0)x,G_0(AG - \la_0)x)\\
&= ((AG - \la_0)p(AG)x,(GA - \ol{\la_0})^{-1}Gx)\\
&= (p(AG)x,Gx)\ge 0.
\end{align*}
Conversely, assume that (ii) holds and let $x\in\dom(AG)^{d+1}$. Then with $y := (AG - \la_0)x\in\dom(AG)^d$ the following holds:
\begin{align*}
(p(AG)x,Gx)
&= (p(AG)(AG - \la_0)^{-1}y,G(AG - \la_0)^{-1}y)\\
&= (p(AG)y,(GA - \ol{\la_0})^{-1}G(AG - \la_0)^{-1}y)\\
&= (p(AG)y,G_0y) = [p(AG)y,y]\ge 0.
\end{align*}
Hence, the proof is finished if $d = 0$. Let $d > 0$ and $x\in\dom(AG)^d$. As $\rho(AG)\neq\emptyset$, there exists a sequence $(x_n)\subset\dom(AG)^{d+1}$ such that for $k=0,1,\ldots,d$ we have $(AG)^kx_n\to(AG)^kx$ as $n\to\infty$. Moreover, due to $\dom AG\subset\dom G$ and the closedness of $AG$ and $G$ there exists $c > 0$ such that
$$
\|Gu\|\,\le\,c\big(\|u\| + \|AGu\|\big)\quad\text{for all }u\in\dom AG.
$$
Therefore, from $x_n\to x$ and $AGx_n\to AGx$ we conclude $Gx_n\to Gx$ as $n\to\infty$. This gives $(p(AG)x,Gx) = \lim_{n\to\infty}\,(p(AG)x_n,Gx_n)\ge 0$. The lemma is proved.
\end{proof}

The proof of the following lemma is similar to that of Lemma \ref{l:indep} and is therefore omitted.

\begin{lem}
Let $p\neq 0$ be a polynomial with real coefficients and degree $d$. Then the following holds:
\begin{enumerate}
\item[{\rm (a)}] If $(A,G)$ is definitizable with definitizing polynomial $p$, then $(G,A)$ is definitizable with definitizing polynomial $\la p(\la)$.
\item[{\rm (b)}] If $G$ is boundedly invertible, then $(A,G)$ is definitizable with definitizing polynomial $p$ if and only if the relation $(p(GA)x,G^{-1}x)\ge 0$ holds for all $x\in\dom(GA)^{\max\{1,d\}}$.
\end{enumerate}
\end{lem}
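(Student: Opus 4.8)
The plan is to imitate the proof of Lemma~\ref{l:indep} almost verbatim: establish the relevant inner-product identities on a dense ``over-domain'' where every algebraic manipulation is legitimate, and then pass to the nominal domains by a closure argument. Throughout I assume \eqref{e:ass} holds; in (b) this costs nothing, since for boundedly invertible $G$ condition \eqref{e:ass} is equivalent to $\rho(AG)\neq\emptyset$, which is built into the notion of definitizability, and the resolvent-set requirements entering the conclusions are already present in the hypotheses. So in both parts only the sign condition needs proof.

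For (a), set $q(\la):=\la p(\la)$, so that $\deg q=d+1$ and $\max\{1,d+1\}=d+1$. The elementary intertwining $A\,(GA)^k=(AG)^k\,A$ (valid on $\dom(GA)^{k+1}$, since $A\cdot GA=AG\cdot A$) yields $A\,p(GA)=p(AG)\,A$ on $\dom(GA)^{d+1}$, and $Ax\in\dom(AG)^{d+1}$ whenever $x\in\dom(GA)^{d+2}$. For such $x$ I would compute, with $y:=Ax$,
$$
q(GA)x\;=\;(GA)\,p(GA)\,x\;=\;G\,\bigl(A\,p(GA)\,x\bigr)\;=\;G\,p(AG)\,(Ax)\;=\;G\,p(AG)\,y,
$$
all intermediate vectors lying in the required domains because $x\in\dom(GA)^{d+2}$; in particular $p(AG)y\in\dom AG\subseteq\dom G$. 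Since $G$ is selfadjoint and $y\in\dom G$, this gives
$$
\bigl(q(GA)x,\,Ax\bigr)\;=\;\bigl(G\,p(AG)\,y,\,y\bigr)\;=\;\bigl(p(AG)\,y,\,Gy\bigr)\;\ge\;0,
$$
the inequality being the definitizability of $(A,G)$ with $p$, as $y\in\dom(AG)^{d+1}\subseteq\dom(AG)^{\max\{1,d\}}$. Finally I would extend $\bigl(q(GA)x,Ax\bigr)\ge 0$ from the dense set $\dom(GA)^{d+2}$ to all of $\dom(GA)^{d+1}=\dom q(GA)$ exactly as at the end of the proof of Lemma~\ref{l:indep}: approximate $x$ by $x_n\in\dom(GA)^{d+2}$ with $(GA)^k x_n\to(GA)^k x$ for $k\le d+1$ (possible since $\rho(GA)\neq\emptyset$), note $q(GA)x_n\to q(GA)x$, and use the graph estimate $\|Au\|\le c\bigl(\|u\|+\|GAu\|\bigr)$ --- valid because $GA$ and $A$ are closed and $\dom GA\subseteq\dom A$ --- to deduce $Ax_n\to Ax$, so that the inequality survives in the limit. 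Together with $\rho(AG),\rho(GA)\neq\emptyset$ from the hypothesis this shows that $(G,A)$ is definitizable with $\la p(\la)$.

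For (b), the point is that a boundedly invertible selfadjoint $G$ gives the identities $G(AG)=(GA)G$, hence $G\,p(AG)=p(GA)\,G$, on their natural domains, while $G^{-1}$ is bounded selfadjoint with $GG^{-1}=I$ and $(G^{-1}u,Gv)=(u,v)$ for $v\in\dom G$. Put $m:=\max\{1,d\}$. For $x\in\dom(GA)^m$ the vector $y:=G^{-1}x$ satisfies $Gy=x$ and lies in $\dom(AG)^m$ --- because $(AG)^k(G^{-1}x)=A\,(GA)^{k-1}x$ is defined for $k\le m$ --- and one checks $G\,p(AG)\,y=p(GA)\,x$, all intermediate terms being in $\dom G$ as soon as $x\in\dom(GA)^m$. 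Hence, using selfadjointness of $G$,
$$
\bigl(p(GA)x,\,G^{-1}x\bigr)\;=\;\bigl(G\,p(AG)\,y,\,y\bigr)\;=\;\bigl(p(AG)\,y,\,Gy\bigr)\;=\;\bigl(p(AG)\,(G^{-1}x),\,x\bigr).
$$
Read from left to right, this identity together with definitizability of $(A,G)$ gives the ``only if'' direction. For ``if'', I would take $y\in\dom(AG)^{m+1}$ (a dense set), put $x:=Gy\in\dom(GA)^m$ (with $G^{-1}x=y$), and run the identity backwards to obtain $\bigl(p(AG)y,Gy\bigr)=\bigl(p(GA)x,G^{-1}x\bigr)\ge 0$; then I would extend to all $y\in\dom(AG)^m=\dom p(AG)$ by the same closure argument as in (a), now using $\|Gu\|\le c\bigl(\|u\|+\|AGu\|\bigr)$.

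As in Lemma~\ref{l:indep}, the only real difficulty is domain bookkeeping: the clean intertwining identities hold only on the over-domains $\dom(GA)^{d+2}$, respectively $\dom(AG)^{m+1}$, where the vectors involved are regular enough for $G$ (or $A$) to act on them and for the polynomials in $AG$ and $GA$ to intertwine through $A$ (respectively $G$); pushing down to the nominal domains $\dom q(GA)$, respectively $\dom p(GA)$ --- which is genuinely needed, e.g., for constant $p$ in part (a) and for the ``if'' direction of (b) --- is exactly where the closure argument with the graph-norm inequalities is indispensable. Everything else is a routine transcription of the computations already performed in the proof of Lemma~\ref{l:indep}.
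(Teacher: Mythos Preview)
Your proposal is correct and follows precisely the route the paper indicates: the paper omits the proof, stating only that it ``is similar to that of Lemma~\ref{l:indep}'', and your argument is exactly such a transcription --- establish the algebraic identity on an over-domain via the intertwining $A(GA)^k=(AG)^kA$ (respectively $G(AG)^k=(GA)^kG$), and then descend to the nominal domain by the graph-norm closure step from Lemma~\ref{l:indep}. One small remark: in part~(a) for $d\ge 1$ the over-domain $\dom(GA)^{d+2}$ is a bit more than you actually need (already $\dom(GA)^{d+1}$ gives $y=Ax\in\dom(AG)^d=\dom(AG)^{\max\{1,d\}}$, so no closure step is required there), but this is harmless and your uniform treatment correctly covers the case $d=0$, where the closure argument is genuinely needed.
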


It is well-known (see \cite{l}) that the spectrum of a definitizable operator $T$ in a Krein space is real -- with the possible exception of a finite number of non-real poles of the resolvent of $T$ -- and that $T$ has a spectral function on $\R$ with a finite number of singularities. The following two theorems generalize this result to definitizable pairs of selfadjoint operators.

\begin{thm}\label{t:main}
If $(A,G)$ is definitizable, then the following statements hold.
\begin{enumerate}
\item[{\rm (a)}] The non-real spectrum of $AG$ consists of a finite number of points which are poles of the resolvent of $AG$. Each such point is a zero of every definitizing polynomial for $(A,G)$.
\item[{\rm (b)}] If $\la\in\sigma(AG)\cap(\R\setminus\{0\})$ and $p(\la) > 0$ for some definitizing polynomial $p$ for $(A,G)$, then $\la\in\sp(AG)$.
\item[{\rm (c)}] If $\la\in\sigma(AG)\cap(\R\setminus\{0\})$ and $p(\la) < 0$ for some definitizing polynomial $p$ for $(A,G)$, then $\la\in\sm(AG)$.
\end{enumerate}
\end{thm}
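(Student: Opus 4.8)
The plan is to reduce, via Lemma~\ref{l:indep}, to the equivalent hypothesis that $AG$ is $G_0$-symmetric with $[p(AG)x,x]\ge0$ for all $x\in\dom p(AG)$, and then to adapt the classical arguments of Langer and of Langer--Markus--Matsaev, the two genuinely new issues being the possible degeneracy of $\product$ and the possible unboundedness of $A$ and $G$. The unboundedness will be absorbed by two devices. Put $d:=\deg p$ and fix a large even integer $m-1\ge d$. For an approximate eigensequence $(x_n)$ of $AG$ at a point $\la$ (necessarily $\la\neq\la_0,\ol{\la_0}$ when $\la\in\sigma(AG)$), I pass to $w_n:=c(AG)x_n$, where $c(AG):=\bigl((AG-\la_0)(AG-\ol{\la_0})\bigr)^{-(m-1)/2}$ is a product of resolvents; since the polynomial $(t-\la_0)(t-\ol{\la_0})$ has real coefficients, $c(AG)$ is bounded and $G_0$-symmetric, so $w_n\in\dom p(AG)$, $(AG-\la)w_n\to0$, $p(AG)w_n-p(\la)w_n\to0$, $\|w_n\|$ is bounded, and — when $\la\in\R$ — $\|w_n\|$ is bounded away from $0$ and $[w_n,w_n]=|\la-\la_0|^{-2(m-1)}[x_n,x_n]+o(1)$, so the sign behaviour of $[x_n,x_n]$ is unaffected. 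Secondly, with $b(AG):=p(AG)\,c(AG)^2$ the operator $G_0b(AG)$ is bounded, selfadjoint and \emph{nonnegative} — because $(G_0b(AG)x,x)=[p(AG)c(AG)x,c(AG)x]\ge0$ for all $x\in\calH$ — and one has the identity $(G_0b(AG)x_n,x_n)=[p(AG)w_n,w_n]$. Writing $G_0=G\bigl((AG-\la_0)(AG-\ol{\la_0})\bigr)^{-1}$, this also gives $G_0b(AG)=G\,p(AG)g(AG)$ for a suitable real rational function $g$ of $AG$ (a product of resolvents). This second device is what converts $\product$-smallness into $\hproduct$-smallness.

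For the non-real spectrum (part (a)) I first take $\la\in\sap(AG)\setminus\R$. Applying the $G_0$-symmetry of $AG$ to $w_n$ gives $(\la-\ol\la)[w_n,w_n]=[AGw_n,w_n]-[w_n,AGw_n]+o(1)=o(1)$, so $[w_n,w_n]\to0$ and hence $[p(AG)w_n,w_n]=p(\la)[w_n,w_n]+o(1)\to0$, i.e.\ $(G_0b(AG)x_n,x_n)\to0$, whence $G_0b(AG)x_n\to0$. If $p(\la)\neq0$, this says that $\theta_n:=p(AG)g(AG)x_n$ satisfies $G\theta_n=G_0b(AG)x_n\to0$ while $\|\theta_n\|$ is bounded away from $0$ and $(AG-\la)\theta_n\to0$; since $\la\neq0$, $\|AG\theta_n\|\to|\la|\lim\|\theta_n\|>0$. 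On the other hand $\zeta_n:=G\theta_n$ lies in $\dom(GA)$ once $m$ is large enough, and $(GA-\la_0)\zeta_n=G(AG-\la)\theta_n+(\la-\la_0)G\theta_n$, where both terms tend to $0$ — the first because $G$ composed with the pertinent bounded rational function of $AG$ is bounded — so $AG\theta_n=A\zeta_n=A(GA-\la_0)^{-1}\bigl[(GA-\la_0)\zeta_n\bigr]\to0$ by boundedness of $A(GA-\la_0)^{-1}$ (Proposition~\ref{p:basic}), a contradiction. Thus $p(\la)=0$ for every $\la\in\sap(AG)\setminus\R$, and by Corollary~\ref{c:sigs}(ii) the same holds for every $\la\in\sigma(AG)\setminus\R$; hence that set is finite, and each of its points is isolated in $\sigma(AG)$. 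That such a $\la$ is a pole and a zero of every definitizing polynomial then follows from Proposition~\ref{p:iso}(i): on the Krein space $E(AG;\{\la,\ol\la\})\calH$ the operator $AG$ restricts to a bounded $\product$-symmetric operator with spectrum $\{\la,\ol\la\}$, so if $p(\la)\neq0$ then $[p(AG)\cdot,\cdot]$ is a nonnegative nondegenerate — hence Hilbert-space — inner product there, which makes the restriction selfadjoint and forces its spectrum to be real, a contradiction; and once $p(\la)=0$, writing $p=\bigl((t-\la)(t-\ol\la)\bigr)^k\hat p$ with $\hat p(\la)\neq0$, one uses that $E(AG;\la)\calH$ and $E(AG;\ol\la)\calH$ are $\product$-neutral and $\product$-paired non-degenerately inside that Krein space to conclude that $p(AG)$ vanishes on $E(AG;\la)\calH$, hence $(AG-\la)^k=0$ there and $\la$ is a pole of order at most $k$.

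For the real spectrum (parts (b) and (c)) let $\la\in\sigma(AG)\cap(\R\setminus\{0\})$ with $p(\la)>0$; Corollary~\ref{c:sigs}(ii) gives $\la\in\sap(AG)$. For an arbitrary approximate eigensequence $(x_n)$ at $\la$ and the associated $w_n$, the identity $(G_0b(AG)x_n,x_n)=p(\la)[w_n,w_n]+o(1)$ together with $G_0b(AG)\ge0$ yields $p(\la)\liminf[w_n,w_n]\ge0$, so $\liminf[w_n,w_n]\ge0$. If $\liminf[w_n,w_n]=0$, then along a subsequence $(G_0b(AG)x_n,x_n)\to0$, hence $G_0b(AG)x_n\to0$, and exactly the argument of the previous paragraph — valid because $\la\neq0$ — produces a contradiction. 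Therefore $\liminf[w_n,w_n]>0$, i.e.\ $\liminf[x_n,x_n]>0$, so $\la\in\sp(AG)$. Case (c) is obtained the same way with the signs reversed.

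The step I expect to be the real obstacle is the strict positivity (resp.\ negativity) in (b) and (c) — equivalently, excluding the possibility that an approximate eigensequence of $AG$ at $\la\neq0$ is asymptotically $\product$-neutral. This is precisely where the unboundedness of $A$ and $G$ must be controlled, using the identities of Proposition~\ref{p:basic} (chiefly the boundedness of $A(GA-\la_0)^{-1}$ and of $G$ composed with bounded rational functions of $AG$) to convert $\product$-degeneracy into genuine norm-smallness of $AG\theta_n$, which collides with $\|AG\theta_n\|\to|\la|\,\|\theta_n\|$; it is also exactly why $\la=0$ has to be excluded in (b) and (c). The remaining, purely "Krein space", difficulty — that $\product$ may be degenerate — is handled by Lemma~\ref{l:krein} and Proposition~\ref{p:iso} from the previous section.
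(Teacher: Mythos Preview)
Your argument is correct, and the underlying mechanism is the same as the paper's: one manufactures a bounded, selfadjoint, \emph{nonnegative} operator $G_0b(AG)$ (your notation) from the definitizing condition, and then the Cauchy--Schwarz inequality for that nonnegative form converts $\product$-neutrality of an approximate eigensequence into genuine norm-smallness, which collides with $\la\neq 0$ via the boundedness of $A(GA-\la_0)^{-1}$.

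The paper packages things differently. Rather than working with approximate eigensequences of $AG$ throughout, it introduces an explicit bounded selfadjoint $A_0$ with $A_0G_0 = r_1(AG)$ for a suitable rational function $r_1$ and with $G_0A_0G_0\ge 0$, and then appeals to the separately stated Proposition~\ref{p:bounded_non-negative} about such products to obtain $\sigma(A_0G_0)\subset\R$ and $(0,\infty)\subset\sp(A_0G_0)$. Part (a) is then finished by the spectral mapping theorem together with a preliminary choice of $\la_0$ ensuring $r_1(z_0)\notin\R$; parts (b) and (c) follow because an approximate eigensequence of $AG$ at $\la_1$ is automatically one of $A_0G_0$ at $r_1(\la_1)$. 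Your route avoids the spectral mapping step entirely and replaces Proposition~\ref{p:bounded_non-negative} by the direct $\theta_n/\zeta_n$ computation using Proposition~\ref{p:basic}; this is more hands-on and requires keeping track of degrees so that all operators of the form $G\cdot(\text{rational function of }AG)$ appearing are bounded, but it has the virtue of making transparent exactly where $\la\neq 0$ and the boundedness of $A(GA-\la_0)^{-1}$ enter. The paper's organization, in turn, isolates Proposition~\ref{p:bounded_non-negative} as a clean standalone statement (it is even used to give an alternative proof of a hyperinvariant-subspace result of Radjavi--Rosenthal). For the pole assertion both proofs coincide: show $[p(AG)x,y]=0$ on the Krein space $E(AG;\{\la,\ol\la\})\calH$ and invoke non-degeneracy there; your sketch of this step via neutrality and dual pairing is correct, though the paper writes out the polarization explicitly.

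One small remark: your separate ``$p(\la)\neq 0$'' argument on $E(AG;\{\la,\ol\la\})\calH$ is redundant --- your approximate-eigensequence argument already shows $p(\la)=0$ for \emph{every} definitizing polynomial $p$, since that argument used nothing about $p$ beyond $[p(AG)y,y]\ge 0$.
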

\begin{proof}
Let $p$ be a definitizing polynomial for $(A,G)$ and set $m := \deg(p)+1$. Let $z_0\in\C\setminus\R$ such that $p(z_0)\neq 0$. First of all let us prove that there exists some $\la_1\in\rho(AG)$ such that
$$
z_0^2p(z_0)(z_0 - \la_1)^{-m-1}(z_0 - \ol{\la_1})^{-m-1}\,\notin\,\R.
$$
To see this, choose two open intervals $J_1$ and $J_2$ such that $0\notin J_2$, $z_0\notin J_1\times J_2$ and $J_1\times J_2\subset\rho(AG)$. With $\la = x + iy\in J_1\times J_2$ and $z_0 = \alpha_0 + i\beta_0$ we have
$$
(z_0 - \la)(z_0 - \ol\la) = (\alpha_0 - x)^2 - \beta_0^2 + y^2 + 2i\beta_0(\alpha_0 - x) =: f(x,y).
$$
The function $f : J_1\times J_2\to\R^2$ has the derivative
$$
f'(x,y) = \mat{-2(\alpha_0 - x)}{2y}{-2\beta_0}{0}.
$$
Its determinant equals $4\beta_0y$ and does therefore not vanish as $0\notin J_2$ and $z_0\notin\R$. Hence, $f(J_1\times J_2)$ is an open set in $\C\setminus\{0\}$, and thus also
$$
\{z_0^2p(z_0)(z_0 - \la)^{-m-1}(z_0 - \ol\la)^{-m-1} : \la\in J_1\times J_2\} = \{z_0^2p(z_0)z^{-m-1} : z\in f(J_1\times J_2)\}
$$
is open.

By Lemma \ref{l:indep} it is no restriction to assume $\la_0 = \la_1\,(\neq z_0)$. For $k=1,2$ define the rational functions
\begin{equation}\label{e:r}
r_k(\la) := \la^2p(\la)(\la - \la_0)^{-m-k}(\la - \ol{\la_0})^{-m-k}.
\end{equation}
Then $r_1(z_0)\notin\R$. Define the bounded operator
\begin{equation}\label{e:A0}
A_0 := AGAp(GA)(GA - \la_0)^{-m}(GA - \ol{\la_0})^{-m}.
\end{equation}
It is not difficult to see that $A_0$ is selfadjoint. Moreover, we observe that
\begin{align*}
Gr_2(AG)
&= GAGAGp(AG)(AG - \la_0)^{-m-2}(AG - \ol{\la_0})^{-m-2}\\
&= G_0AGp(AG)AG(AG - \la_0)^{-m-1}(AG - \ol{\la_0})^{-m-1}\\
&= G_0A_0G_0.
\end{align*}
Similarly, one proves that
$$
r_1(AG) = A_0G_0.
$$
In addition, $G_0A_0G_0\ge 0$ holds as for $x\in\calH$ we have
$$
y := AG(AG - \la_0)^{-m-1}x\in\dom p(AG)
$$
and
\begin{align*}
(G_0A_0G_0x,x)
&= (GAGAGp(AG)(AG - \la_0)^{-m-2}(AG - \ol{\la_0})^{-m-2}x,x)\\
&= (GAG(AG - \ol{\la_0})^{-m-2}(AG - \la_0)^{-1}p(AG)y,x)\\
&= (GA(AG - \ol{\la_0})^{-m-1}G_0p(AG)y,x)\\
&= (G_0p(AG)y,y) = [p(AG)y,y]\ge 0.
\end{align*}
By virtue of Proposition \ref{p:bounded_non-negative} we obtain $\sigma(r_1(AG)) = \sigma(A_0G_0)\subset\R$. And since $r_1(\cdot)$ is analytic in a neighborhood of $\sigma(AG)\cup\{\infty\}$, it is a consequence of the spectral mapping theorem \cite[Theorem VII.9.5]{ds} that $r_1(\sigma(AG))\subset\R$ and thus $z_0\in\rho(AG)$. To complete the proof of (a) it remains to show that each $\la\in\sigma(AG)\setminus\R$ is a pole of the resolvent of $AG$. To this end we show that
\begin{equation}\label{e:delete}
p(AG)E(AG;\{\la,\ol\la\}) = 0.
\end{equation}
From this it follows that also $p(AG)E(AG;\la) = 0$. And since the spectrum of $AG|E(AG;\la)\calH$ coincides with $\{\la\}$, we have $(AG - \la)^\alpha E(AG;\la) = 0$, where $\alpha$ is the order of $\la$ as a zero of $p$. This and \cite[Theorem VII.3.18]{ds} imply the assertion. So, let us prove \eqref{e:delete}. Let $y\in E(AG;\la)\calH$ and $z\in E(AG;\ol\la)\calH$ be arbitrary. By Lemma \ref{l:E} we have $[p(AG)y,y] = [E(AG;\la)p(AG)y,y] = [p(AG)y,E(AG;\ol\la)y] = 0$, $[p(AG)z,z] = 0$ and thus
\begin{align*}
[p(AG)y,z] + [p(AG)z,y]
&= [p(AG)y,y+z] + [p(AG)z,y+z]\\
&= [p(AG)(y+z),y+z]\ge 0.
\end{align*}
But at the same time,
\begin{align*}
-[p(AG)y,z] - [p(AG)z,y]
&= [p(AG)y,-z] + [p(AG)(-z),y]\\
&= [p(AG)(y-z),y-z]\ge 0.
\end{align*}
Hence, $[p(AG)(y+z),y+z] = 0$ and thus $[p(AG)x,x] = 0$ holds for all $x\in E(AG;\{\la,\ol\la\})\calH$. By polarization we obtain $[p(AG)x,y] = 0$ for all vectors $x,y\in E(AG;\{\la,\ol\la\})\calH$. But $(E(AG;\{\la,\ol\la\})\calH,\product)$ is a Krein space by Proposition \ref{p:iso}(i), and $p(AG)x = 0$ for all $x\in E(AG;\{\la,\ol\la\})\calH$ follows. Hence, (a) is proved.

For the proof of (b) we observe that by (a) there exists a definitizing polynomial $p$ for $(A,G)$ such that $p(\la_0)\neq 0$. Define the rational function $r_1$ as in \eqref{e:r}. Let $\la_1\in\R\setminus\{0\}$ such that $p(\la_1) > 0$. Then also $r_1(\la_1) > 0$, and there exists a function $g$ which is analytic on $\calU := \ol\C\setminus\{\la_0,\ol{\la_0}\}$ such that
$$
r_1(\la) - r_1(\la_1) = g(\la)(\la - \la_1),\quad\la\in\calU.
$$
It is obvious that $g$ is a rational function with the poles $\la_0$ and $\ol{\la_0}$, both of order $m+1$. Therefore, there exists a polynomial $q$ (with real coefficients) with $q(\la_0)\neq 0$ (and hence also $q(\ol{\la_0})\neq 0$) such that
$$
g(\la) = q(\la)(\la - \la_0)^{-m-1}(\la - \ol{\la_0})^{-m-1}.
$$
From the identity
$$
\la^2p(\la) - r_1(\la_1)(\la - \la_0)^{m+1}(\la - \ol{\la_0})^{m+1} = q(\la)(\la - \la_1)
$$
we see that $\deg(q) = 2m+1$. Hence, the operator $g(AG)$ is bounded. Let $(x_n)\subset\dom AG$ be a sequence with $\|x_n\| = 1$ and $(AG - \la_1)x_n\to 0$ as $n\to\infty$. With the operator $A_0$ from \eqref{e:A0} we have
$$
(A_0G_0 - r_1(\la_1))x_n = (r_1(AG) - r_1(\la_1))x_n = g(AG)(AG - \la_1)x_n\to 0
$$
as $n\to\infty$. And since $G_0A_0G_0\ge 0$ it follows from $r_1(\la_1) > 0$ and Proposition \ref{p:bounded_non-negative} that
$$
\liminf_{n\to\infty}\,[x_n,x_n] > 0.
$$
This shows that $\la_1\in\sp(AG)$. The assertion (c) is proved similarly.
\end{proof}

The following example shows that the condition ($*$) is essential for Theorem \ref{t:main} to be valid.

\begin{ex}
Let $T$ be a closed and densely defined symmetric operator in the Hilbert space $\calH$ which is uniformly positive but not selfadjoint. Then $T$ has a uniformly positive selfadjoint extension $A$ (e.g., the Friedrichs extention). Since for $x\in\dom(T^*T)$ we have $(T^*Tx,x) = \|Tx\|^2\ge\delta\|x\|^2$ with some $\delta > 0$, the selfadjoint operator $|T| := (T^*T)^{1/2}$ is boundedly invertible. We set $G := |T|^{-1}$. Then $AG = T|T|^{-1}$ and hence $(AGx,Gx)\ge 0$ for $x\in\dom AG$. But since $AG$ is bounded while $A$ is unbounded, it follows from Remark \ref{r:no_star} that ($*$) is not satisfied. Let us now see that the statements (a)--(c) of Theorem \ref{t:main} do not apply. For this we note that for $x\in\dom |T|$ and $y\in\calH$ we have
$$
\big(T|T|^{-1}x,T|T|^{-1}y\big) = \big((T^*T)^{1/2}x,(T^*T)^{-1/2}y\big) = (x,y)
$$
which shows that the operator $AG$ is an isometry with $\dom AG = \calH$ and $\ran AG = \ran T\neq\calH$. The spectrum of $AG$ therefore coincides with the closed unit disk.
\end{ex}

Assume that $(A,G)$ is definitizable. Theorem \ref{t:main} shows that there is only a finite number of real points which are not contained in $\rho(AG)\cup\sp(AG)\cup\sm(AG)$. In analogy to definitizable operators in Krein spaces these exceptional points will be called the {\it critical points} of $(A,G)$. By Theorem \ref{t:main} each non-zero critical point of $(A,G)$ is a zero of every definitizing polynomial for $(A,G)$. Moreover, if $G$ is not boundedly invertible, then due to Propsition \ref{p:zero_res} and Corollary \ref{c:zero_+} zero is a critical point of $(A,G)$. The set of the critical points of $(A,G)$ is denoted by $c(A,G)$.

\begin{thm}\label{t:sf}
Assume that $(A,G)$ is definitizable. Then the operator $AG$ possesses a spectral function on $\R$ with the set of critical points $s := c(A,G)$.
\end{thm}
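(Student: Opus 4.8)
The plan is to build the global spectral function $E$ on $\R$ by gluing together the local spectral functions of positive and negative type provided by Theorem \ref{t:lsf}, together with the Riesz--Dunford projections associated with the non-zero critical points. First I would fix a definitizing polynomial $p$ for $(A,G)$ with $p(\la_0)\neq 0$ (possible by Theorem \ref{t:main}(a)) and list the real zeros $t_1 < t_2 < \cdots < t_k$ of $p$, together with $0$; these, intersected with $\sigma(AG)$, contain the set $s = c(A,G)$ of critical points. On each open interval $J$ of $\R\setminus\{t_1,\dots,t_k,0\}$ the sign of $p$ is constant, so by Theorem \ref{t:main}(b),(c) the interval $J$ is of positive type (if $p>0$ on $J$) or of negative type (if $p<0$ on $J$) with respect to $AG$. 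Since by Corollary \ref{c:sigs}(i) the non-real spectrum is symmetric and, by Theorem \ref{t:main}(a), finite and consisting of poles, every point of $\C^\pm$ sufficiently close to the real line lies in $\rho(AG)$; hence the accumulation-point hypothesis of Theorem \ref{t:lsf} is met, and $AG$ has a local spectral function $E_J$ without critical points on each such $J$, with $E_J(\Delta)\calH$ uniformly positive or negative and $E_J(\Delta)$ being $G_0$-symmetric.

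Next I would patch these together. For a set $\Delta\in\mathfrak R_s(\R)$ whose closure avoids the critical points, $\Delta$ is a finite disjoint union of pieces $\Delta\cap J$ (plus possibly pieces straddling a non-critical zero of $p$, which still lie in an interval of pure type after one notes that adjacent same-type intervals can be merged), and one sets $E(\Delta) := \sum_J E_J(\Delta\cap J)$. The compatibility properties (S1)--(S2) follow from those of the $E_J$ and the fact that $E_J(\Delta')E_{J'}(\Delta'') = 0$ for $J\neq J'$: this last orthogonality comes from (S4)--(S5) applied to the $E_J$, which force the ranges to have disjoint spectra. To handle a bounded $\Delta$ whose closure may contain a critical point $t$, one uses that $t$ is isolated in $\sigma(AG)$ only if it is — more care is needed — so instead one defines $E(\Delta)$ for $\Delta$ whose \emph{boundary} avoids $s$ by combining the local pieces on the non-critical part with the Riesz projections $E(AG;t)$ for those critical $t$ lying in the interior of $\Delta$ that happen to be isolated points of the spectrum, and otherwise simply excludes a punctured neighbourhood of $t$; the definition of $\mathfrak R_s(\R)$ permits the boundary points to sit anywhere except in $s$, so this is exactly what is required. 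Properties (S3)--(S5) are then inherited from the local spectral functions and the standard functional calculus for the Riesz projections, using Proposition \ref{p:iso}.

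Finally I would verify that the glued $E$ really is a spectral function with critical set exactly $s$, i.e.\ that the one-sided strong limits $s\text{-}\lim_{t\to 0}E([\la-\veps,\la-t])$ and $s\text{-}\lim_{t\to 0}E([\la+t,\la+\veps])$ do exist at every non-critical point (trivially, since there $E$ agrees locally with the selfadjoint spectral measure of $AG|E_J(\cdot)\calH$ on a Hilbert space), so that the singularities are confined to $s$. The main obstacle I expect is the behaviour near the critical points: one must check that the pieces $E_J$ coming from intervals on the two sides of a critical point $t$ — which may be of \emph{opposite} type, so that $\product$ is genuinely indefinite near $t$ — fit together into bounded projections satisfying (S1)--(S5), and in particular that $E(\Delta)$ stays uniformly bounded as $\Delta$ shrinks toward $t$ from one side. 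This is the step where the resolvent growth estimate \eqref{e:growth} of Proposition \ref{p:def_type} on each side, combined with the fact (from Theorem \ref{t:main}(a)) that only the zeros of the fixed polynomial $p$ can be critical, must be used to control the norms; the rest is bookkeeping with the additivity and multiplicativity axioms.
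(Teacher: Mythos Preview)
Your overall architecture --- patching the local spectral functions of Theorem \ref{t:lsf} on the intervals between zeros of $p$ --- matches Step 1 of the paper's proof. The gap is in how you handle an interval $[a,b]\in\mathfrak R_s(\R)$ containing a critical point $t$ in its interior. You propose to use the Riesz--Dunford projection $E(AG;t)$ when $t$ is isolated in $\sigma(AG)$ and to ``exclude a punctured neighbourhood'' otherwise. But a critical point need not be isolated: it may be an accumulation point of spectrum from one or both sides, in which case $E(AG;t)$ does not exist, and ``excluding a neighbourhood'' simply leaves $E([a,b])$ undefined --- yet $E([a,b])$ \emph{must} be defined whenever $a,b\notin s$. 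The local functions $E_J$ on either side of $t$ cannot produce it by themselves, since each lives on an interval that stops short of $t$ and there is no a priori limit as the interval approaches $t$.

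The paper's device is different: given $[a,b]$ with $t\in(a,b)$, choose $a<a'<t<b'<b$ and peel off the uniformly definite subspaces $\calL_0=E([a,a'])\calH$ and $\calL_1=E([b',b])\calH$ (available from Step 1). On their $\product$-orthogonal complement $\wt\calH$ the restriction $\wt T:=AG|\wt\calH$ satisfies $(a,a')\cup(b',b)\subset\rho(\wt T)$, so $[a',b']$ becomes an \emph{isolated} spectral set of $\wt T$ regardless of whether $t$ was isolated for $AG$. One then sets $E([a,b]):=I_{\calL_0}\,[\ds]\,I_{\calL_1}\,[\ds]\,\wt E_\Delta$, with $\wt E_\Delta$ the Riesz--Dunford projection of $\wt T$ corresponding to $[a',b']$; independence of $a',b'$ is verified by showing $E([a,b])\calH$ is the maximal spectral subspace for $[a,b]$ and that $E([a,b])$ is in the bicommutant of the resolvent. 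This is the missing idea in your plan. A minor further point: your last paragraph conflates the \emph{set of critical points} of $E$ (the set $s$ where boundary points of $\Delta$ are forbidden) with the \emph{singularities} of $E$ (where one-sided strong limits fail). The theorem only asserts the former, so the boundedness control you flag as the main obstacle is not part of what has to be proved.
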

\begin{proof}
The proof is divided into several steps. In step 1 we define the spec\-tral projection $E(\Delta)$ for sets $\Delta$ which have a positive distance to $s$. In step 2, $E(\Delta)$ is defined for compact intervals. This will be used in step 3 to define $E(\Delta)$ for all $\Delta\in\mathfrak R_s(\R)$.

{\bf 1.} By $\mathfrak R_{s,0}(\R)$ we denote the system of all sets $\Delta$ in $\mathfrak R_s(\R)$ with $\Delta\cap s = \emptyset$. In this first step of the proof we define $E(\Delta)$ for $\Delta\in\mathfrak R_{s,0}(\R)$ and prove that the set function $E$ on $\mathfrak R_{s,0}(\R)$ satisfies (S1)--(S5) in Definition \ref{d:sf}. Let $p$ be a definitizing polynomial for $(A,G)$ and let $Z$ be the set of zeros of $p$. By Theorem \ref{t:main} the points in $Z$ divide the real line into intervals which are of either positive or negative type with respect to $AG$. The set $Z$ contains the critical points of $(A,G)$, but there might be spectral points of $AG$ in $Z$ which are not critical. However, a slight modification of the set $Z$ leads to a finite set $Z'$ of real points which divide $\R$ into intervals $J_1,\ldots,J_n$ of positive or negative type with respect to $AG$, respectively, such that $Z'\cap\sigma(AG) = s$. By Theorem \ref{t:lsf}, on each interval $J_k$ the operator $AG$ has a local spectral function $E_k$. For $\Delta\in\mathfrak R_{s,0}(\R)$ we set $\Delta_k := \Delta\cap J_k\cap\sigma(AG)$, $k=1,\ldots,n$, and
$$
E(\Delta) := \sum_{k=1}^n\,E_k(\Delta_k).
$$
As $\Delta_k\in\mathfrak R(J_k)$ for $k=1,\ldots,n$, this is a proper definition. Each of the subspaces $\calL_k := E_k(\Delta_k)$, $k=0,\ldots,n$, is contained in $\dom AG$ and is $AG$-invariant. In the following we shall show that $\calL_k\cap\calL_j = \{0\}$ for $k\neq j$. Let $\la\in\C$ be arbitrary. Then $\la\notin\ol{\Delta_k}$ or $\la\notin\ol{\Delta_j}$. Assume $\la\notin\ol{\Delta_j}$. Then $\la\in\rho(AG|\calL_j)$ and thus $\ker(AG|\calL_k\cap\calL_j - \la) = \{0\}$. Let $y\in\calL_k\cap\calL_j$. Then, as $y\in\calL_j$, the vector
$$
x := (AG|\calL_j - \la)^{-1}y = \lim_{\eta\downto 0}\,(AG - (\la + i\eta))^{-1}y
$$
exists and is contained in both $\calL_j$ and $\calL_k$. Hence, we have $\la\in\rho(AG|\calL_k\cap\calL_j)$. As this is similarly proved for $\la\notin\ol{\Delta_k}$, it follows that $\sigma(AG|\calL_k\cap\calL_j) = \emptyset$ and hence $\calL_k\cap\calL_j = \{0\}$. Therefore, as $E_k(\Delta_k)$ and $E_j(\Delta_j)$ commute, we obtain
$$
E_k(\Delta_k)E_j(\Delta_j) = E_j(\Delta_j)E_k(\Delta_k) = 0.
$$
This shows that $\calL_k\ds\calL_j$ is a subspace and that $\calL_k\subset\calL_j^\gperp$. In fact, we have shown that
$$
E(\Delta)\calH = E_0(\Delta_0)\calH\,[\ds]\,\ldots\,[\ds]\,E_n(\Delta_n)\calH.
$$
With the help of this decomposition it is easily seen that the function $E$, defined on $\mathfrak R_{s,0}(\R)$, satisfies (S1)--(S5) in Definition \ref{d:sf}.

{\bf 2.} In this step we define the spectral projection $E([a,b])$ for a compact interval $[a,b]\in\mathfrak R_s(\R)$. To this end choose $a',b'$ with $a < a' < b' < b$ such that there is no critical point of $AG$ in $[a,a']\cup [b',b]$. We set
$$
\Delta_0 := [a,a']\quad\text{and}\quad\Delta_1 := [b',b].
$$
Define the spectral subspaces $\calL_j := E(\Delta_j)\calH$, $j=0,1$. As these are both uniformly definite, on account of Lemma \ref{l:ks->oc} we have
\begin{equation}\label{e:decomp}
\calH = \calL_0\,[\ds]\,\calL_1\,[\ds]\,\wt\calH,
\end{equation}
where $\wt\calH = (\calL_0[\ds]\calL_1)^\gperp = (I - E(\Delta_0\cup\Delta_1))\calH$. We set $T_j := AG|\calL_j$, $j=0,1$, and $\wt T := AG|\wt\calH$. With respect to the decomposition \eqref{e:decomp} the operator $AG$ decomposes as $AG = T_0\,[\ds]\,T_1\,[\ds]\,\wt T$. As a consequence of the results in step 1 we have
\begin{equation}\label{e:later}
\sigma(\wt T)\,\subset\,\ol{\sigma(AG)\setminus(\Delta_0\cup\Delta_1)}.
\end{equation}
This implies $(a,a')\cup (b',b)\subset\rho(\wt T)$. Set $\Delta := [a',b']$ and denote by $\wt E_\Delta$ the Riesz-Dunford spectral projection of $\wt T$ (in $\wt\calH$) corresponding to $\Delta$. Similarly as in the proof of Lemma \ref{l:E} it is seen that $\wt E_\Delta$ is $\product$-symmetric. With respect to the decomposition \eqref{e:decomp} we now define
$$
E([a,b]) := I_{\calL_0}\,[\ds]\,I_{\calL_1}\,[\ds]\,\wt E_\Delta.
$$
This is obviously a $\product$-symmetric projection in $\calH$ which commutes with the resolvent of $AG$. Moreover, $\sigma(AG|E([a,b])\calH)\subset[a,b]$.

In the following we show that the above definition of $E([a,b])$ is independent of the choice of $a'$ and $b'$. To this end we prove the following claims.
\begin{enumerate}
\item[(C1)] The subspace $E([a,b])\calH$ is the maximal spectral subspace of $AG$ corresponding to $[a,b]$.
\item[(C2)] $E([a,b])$ commutes with every bounded operator which commutes with the resolvent of $AG$.
\end{enumerate}
For the proof of (C1) let $\calK\subset\dom AG$ be an $AG$-invariant (closed) subspace such that $\sigma(AG|\calK)\subset [a,b]$. By Theorem \ref{t:lsf} the maximal spectral subspaces $\calK_j$ of $AG|\calK$ corresponding to $\Delta_j$ exist, $j=0,1$. These are uniformly definite with respect to the inner product $\product$. Hence,
$$
\calK = \calK_0\,[\ds]\,\calK_1\,[\ds]\,\wt\calK,
$$
where $\wt\calK = (\calK_0\,[\ds]\,\calK_1)^\gperp\cap\calK$ and $\sigma(AG|\wt\calK)\subset [a',b']$. From $\sigma(AG|\calK_j)\subset\Delta_j$ and the maximality of $\calL_j$ we conclude $\calK_j\subset\calL_j$, $j=0,1$, and set
$$
\calM := (\calL_0\,[\ds]\,\calL_1) + \wt\calK.
$$
This sum is direct (and hence $\sigma(AG|\calM)\subset [a,b]$): Set $\calL := \calL_0[\ds]\calL_1$. By \cite[Theorem 0.8]{rr}, $\sigma(AG|\calL\cap\wt\calK)\subset (\Delta_0\cup\Delta_1)\cap [a',b'] = \{a',b'\}$. From the maximality of $\calK_0$ and $\calK_1$ it follows that $a',b'\notin\sigma_p(AG|\calL\cap\wt\calK)$. And as the resolvent of $AG|\calL\cap\wt\calK$ satisfies a growth condition \eqref{e:growth} in neighborhoods of $\Delta_0$ and $\Delta_1$, we conclude $\calL\cap\wt\calK = \{0\}$.

Now, with $\wt\calM := (\calL_0[\ds]\calL_1)^\gperp\cap\calM$ we have
$$
\calM = \calL_0\,[\ds]\,\calL_1\,[\ds]\,\wt\calM.
$$
As $\calL_0$ and $\calL_1$ are maximal, the spectrum of $AG|\wt\calM$ is contained in $[a',b']$. Since $\wt\calM\subset\wt\calH$ and $\wt E_\Delta\wt\calH$ (as a Riesz-Dunford spectral subspace) is the maximal spectral subspace of $AG|\wt\calH$ corresponding to $[a',b']$, this implies $\wt\calM\subset\wt E_\Delta\wt\calH$ and hence $\calK\subset\calM\subset E([a,b])\calH$. (C1) is proved.

Let $B$ be a bounded operator in $\calH$ which commutes with the resolvent of $AG$. Then $BAG\subset AGB$ and hence $E(\Delta_j)B = BE(\Delta_j)$, $j=0,1$, see (S3) in Definition \ref{d:sf}. Hence, $\calL_0$ and $\calL_1$ and also their orthogonal companions $\calL_0^\gperp$ and $\calL_1^\gperp$ are $B$-invariant. And as $\wt\calH = \calL_0^\gperp\cap\calL_1^\gperp$, it follows that with respect to the decomposition \eqref{e:decomp} the operator $B$ decomposes as $B = B_0[\ds]B_1[\ds]\wt B$. Hence, $\wt B\wt T\subset\wt T\wt B$ which implies that $\wt B$ commutes with $\wt E_\Delta$. Finally, we conclude that $B$ commutes with $E([a,b])$, and (C2) is proved.

Now, let $a'',b''\in\R$ with $a < a'' < b'' < b$ such that $[a,a'']$ and $[b'',b]$ do not contain any point from $s$ and construct a spectral projection of $AG$ corresponding to $[a,b]$ as in step 1 with $a'$ and $b'$ replaced by $a''$ and $b''$. Denote this projection by $P$. As the maximal spectral subspace of $AG$ corresponding to $[a,b]$ is unique, we have $P\calH = E([a,b])\calH$ by (C1). Therefore, $PE([a,b]) = E([a,b])$ and $E([a,b])P = P$. But (C2) yields that $P$ and $E([a,b])$ commute. Therefore, $P = E([a,b])P = PE([a,b]) = E([a,b])$.

Above, it was shown that $E([a,b])$ commutes with any bounded operator in $\calH$ which commutes with the resolvent of $AG$ and that $\sigma(AG|E([a,b])\calH)\subset\sigma(AG)\cap [a,b]$ holds. Hence, the projection $E([a,b])$ has the properties (S3) and (S4) in Definition \ref{d:sf}. It also satisfies (S5) as due to $(a,a')\cup (b',b)\subset\rho(\wt T)$ and \eqref{e:later} we have
\begin{align*}
\sigma(AG|(I - E([a,b]))\calH)
&= \sigma(\wt T|(I - \wt E_\Delta)\calH) = \sigma(\wt T)\setminus (a,b)\\
&\subset \ol{\sigma(AG)\setminus (\Delta_0\cup\Delta_1)}\setminus (a,b) = \ol{\sigma(AG)\setminus [a,b]}.
\end{align*}
Moreover, similarly as the proof of $E_k(\Delta_k)E_j(\Delta_j) = 0$ in step 1, it is proved that $E([a,b])E([c,d]) = 0$ for compact intervals $[a,b],[c,d]\in\mathfrak R_s(\R)$ with $[a,b]\cap [c,d] = \emptyset$.

{\bf 3.} In this last step of the proof we define the spectral projection $E(\Delta)$ for every $\Delta\in\mathfrak R_s(\R)$ and show that the function $E$, defined on $\mathfrak R_s(\R)$, has the properties (S1)--(S5) in Definition \ref{d:sf}. Let $\Delta\in\mathfrak R_s(\R)$. Then each $\alpha\in\Delta\cap s$ is contained in the interior $\Delta^i$ of $\Delta$. Hence, there exists a compact interval $\Delta_\alpha\subset\Delta$ such that $\Delta_\alpha^i\cap s = \{\alpha\}$. Choose these intervals such that $\Delta_\alpha\cap\Delta_\beta = \emptyset$ for $\alpha,\beta\in\Delta\cap s$, $\alpha\neq\beta$, and define the projection $E(\Delta)$ by
\begin{equation}\label{e:final}
E(\Delta) := \sum_{\alpha\in\Delta\cap s}\,E(\Delta_\alpha) + E\left(\Delta\setminus\bigcup_{\alpha\in\Delta\cap s}\,\Delta_\alpha\right).
\end{equation}
Let $\alpha\in s$ and let $[a,b]\in\mathfrak R_s(\R)$ such that $(a,b)\cap s = \{\alpha\}$. Furthermore, let $a',b'\in (a,b)$ such that $a' < \alpha < b'$. From the construction of $E([a,b])$, $E([a',b])$ and $E([a,b'])$ in step 2 it is seen that
$$
E([a,a')) + E([a',b]) = E([a,b']) + E((b',b]) = E([a,b]).
$$
With the help of this property it is shown that $E(\Delta)$ in \eqref{e:final} is well-defined.

It remains to verify that $E$ satisfies the conditions (S1)--(S5) in Definition \ref{d:sf}. Let $\Delta_1,\Delta_2\in\mathfrak R_s(\R)$. Then $\Delta_j = \Delta_j^1\cup\Delta_j^2$, where $\Delta_j^1\cap\Delta_j^2 = \emptyset$, $\Delta_j^2\in\mathfrak R_{s,0}(\R)$ and
$$
\Delta_j^1 = \bigcup_{\alpha\in\Delta_j\cap s}\,\Delta_\alpha^j
$$
with compact intervals $\Delta_\alpha^j$ as above, $j=1,2$. We may choose the intervals $\Delta_\alpha^j$ such that the following holds:
\begin{enumerate}
\item[(a)] $\Delta_1^2\cap\Delta_2^1 = \Delta_1^1\cap\Delta_2^2 = \emptyset$,
\item[(b)] $\Delta_\alpha^1 = \Delta_\alpha^2$ for $\alpha\in\Delta_1\cap\Delta_2\cap s$,
\item[(b)] $\Delta_\alpha^1\cap\Delta_\beta^2 = \emptyset$ if $\alpha\neq\beta$.
\end{enumerate}
Then we have
\begin{align*}
E(\Delta_1\cap\Delta_2)
&= E((\Delta_1^1\cup\Delta_1^2)\cap(\Delta_2^1\cup\Delta_2^2))\\
&= E((\Delta_1^1\cap\Delta_2^1)\cup(\Delta_1^2\cap\Delta_2^2))\\
&= \sum_{\alpha\in\Delta_1\cap\Delta_2\cap s}\,E(\Delta_\alpha^1) + E(\Delta_1^2)E(\Delta_2^2).
\end{align*}
On the other hand,
$$
E(\Delta_1)E(\Delta_2) = \sum_{\alpha\in\Delta_1\cap s}\,\sum_{\beta\in\Delta_2\cap s}\,E(\Delta_\alpha^1)E(\Delta_\beta^2) + E(\Delta_1^2)E(\Delta_2^2).
$$
And as $E(\Delta_\alpha^1)E(\Delta_\beta^2) = \delta_{\alpha\beta}E(\Delta_\alpha^1)$, where $\delta_{\alpha\beta}$ is the Kronecker delta, (S1) follows.

The proof of (S2) is straightforward and (S3) follows from the facts proved in steps 1 and 2. For the proofs of (S4) and (S5) let $\Delta\in\mathfrak R_s(\R)$. Then $\Delta = \Delta_1\cup\Delta_2$ where $\Delta_1\cap\Delta_2 = \emptyset$, $\Delta_2\in\mathfrak R_{s,0}(\R)$, and $\Delta_1$ is the union of mutually disjoint compact intervals $\Delta_{\alpha_j}\in\mathfrak R_s(\R)$, $j=1,\ldots,r$, with $\Delta_{\alpha_j}\cap s = \{\alpha_j\}$. Due to the definition of $E(\Delta)$ we have
$$
E(\Delta)\calH = E(\Delta_{\alpha_1})\calH\,[\ds]\,\ldots\,[\ds]\,E(\Delta_{\alpha_r})\calH\,[\ds]\,E(\Delta_2)\calH.
$$
Hence,
\begin{align*}
\sigma(AG|E(\Delta)\calH)
&\subset(\sigma(AG)\cap\Delta_{\alpha_1})\cup\ldots\cup(\sigma(AG)\cap\Delta_{\alpha_1})\cup\ol{\sigma(AG)\cap\Delta_2}\\
&= (\sigma(AG)\cap\Delta_1)\cup\ol{\sigma(AG)\cap\Delta_2}\,\subset\,\ol{\sigma(AG)\cap\Delta}.
\end{align*}
From $(I - E(\Delta))\calH\subset (I - E(\Delta_{\alpha_j}))\calH$ for $j=1,\ldots,r$ and $(I - E(\Delta))\calH\subset (I - E(\Delta_2))\calH$ we conclude
\begin{align*}
\sigma(AG|(I - E(\Delta))\calH)&\,\subset\,\sigma(AG|(I - E(\Delta_{\alpha_j}))\calH)\\
\sigma(AG|(I - E(\Delta))\calH)&\,\subset\,\sigma(AG|(I - E(\Delta_2))\calH),
\end{align*}
and therefore
\begin{align*}
\sigma(AG|(I - E(\Delta))\calH)
&\subset \ol{\sigma(AG)\setminus\Delta_{\alpha_1}}\cap\ldots\cap\ol{\sigma(AG)\setminus\Delta_{\alpha_r}}\cap\ol{\sigma(AG)\setminus\Delta_2}\\
&\subset\ol{\sigma(AG)\setminus\Delta_1}\,\cap\,\ol{\sigma(AG)\setminus\Delta_2}\\
&\subset\ol{\sigma(AG)\setminus\Delta}\,\cup\,\partial\Delta_1,
\end{align*}
where $\partial\Delta_1$ is the real boundary of $\Delta_1$. This is a finite set which depends on the choice of the $\Delta_{\alpha_j}$'s. Hence, the theorem is proved.
\end{proof}

\section{An application to Sturm-Liouville problems}\label{s:application}
Let $w$, $p$ and $q$ be real-valued functions on a bounded or unbounded open interval $(a,b)$ such that $w,p^{-1},q\in L^1_{\rm loc}(a,b)$ and $w > 0$ almost everywhere. The differential expression
$$
\tau(f) := \frac 1 w\,\big(-(pf')' + qf\big)
$$
is then called a {\it Sturm-Liouville} differential expression. Usually, the differential operators associated with $\tau$ are considered in the weighted $L^2$-space $L^2_w(a,b)$ which consists of all (equivalence classes of) measurable functions $f : (a,b)\to\C$ for which $f^2w\in L^1(a,b)$. If
$$
\underset{x\in (a,b)}{{\rm ess}\inf}\;w(x) > 0\quad\text{and}\quad\underset{x\in (a,b)}{{\rm ess}\sup}\;w(x) < \infty,
$$
then the topologies of $L^2_w(a,b)$ and $L^2(a,b)$ coincide, and the selfadjoint realizations of $\tau$ in $L^2_w(a,b)$ are similar to selfadjoint operators in $L^2(a,b)$. In the following we use the abstract results from the previous section to show that also in more general cases it can make sense to consider differential operators associated with $\tau$ in (the unweighted space) $L^2(a,b)$.

By $A$ denote the operator of multiplication with the function $w^{-1}$ in the Hilbert space $L^2(a,b)$. The operator $A$ is selfadjoint and non-negative (in $L^2(a,b)$). In addition, define the operator $G_{\max}$ in $L^2(a,b)$ by $G_{\max}f := -(pf')' + qf$, $f\in\dom G_{\max}$, where
$$
\dom G_{\max} := \{f\in L^2(a,b) : f,pf'\in AC_{\rm loc}(a,b),\,-(pf')' + qf\in L^2(a,b)\}.
$$
The selfadjoint realizations of the differential expression
$$
\tau_0(f) := -(pf')' + qf
$$
in $L^2(a,b)$ are well-known to be restrictions of $G_{\max}$. In what follows let $G$ be a selfadjoint realization of $\tau_0$ in $L^2(a,b)$.

\begin{prop}\label{p:SL1}
If $w\in L^\infty(a,b)$ and $G$ is boundedly invertible, then the spectrum of the operator $AG$ is real, and $AG$ has a spectral function without singularities on $\R$.
\end{prop}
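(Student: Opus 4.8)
The plan is to recognize $(A,G)$ as a definitizable pair of selfadjoint operators in the sense of Definition \ref{d:def}, with the simplest conceivable definitizing polynomial $p(\la)=\la$, and then to read off both assertions from Theorems \ref{t:main} and \ref{t:sf}.

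First I would verify condition ($*$). Since $w>0$ almost everywhere and $w\in L^\infty(a,b)$, one has $w^{-1}\ge\|w\|_\infty^{-1}$ almost everywhere, so $A$ is boundedly invertible and $A^{-1}$ is the bounded operator of multiplication by $w$. As $G$ is boundedly invertible by hypothesis, $AG$ is injective, and for $g\in L^2(a,b)$ the element $G^{-1}A^{-1}g$ lies in $\dom(AG)$ and is mapped by $AG$ onto $g$; hence $(AG)^{-1}=G^{-1}A^{-1}\in L(L^2(a,b))$ and $0\in\rho(AG)$. By the clause of the lemma on sufficient conditions for ($*$) stating that ``$G$ boundedly invertible and $\rho(AG)\neq\emptyset$'' is enough, condition ($*$) holds, and we may fix $\la_0\in\rho(AG)\setminus\R$ and form $G_0$ as in \eqref{e:G0} together with the inner product $\product$ as in \eqref{e:ip}.

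Next I would note that $p(\la)=\la$ is definitizing for $(A,G)$: for $x\in\dom(AG)$ we have $Gx\in\dom A$, and the non-negativity of $A$ gives
\[
(p(AG)x,Gx) = (A(Gx),Gx) \ge 0 .
\]
Since $\deg p=1$ and hence $\max\{1,\deg p\}=1$, this is exactly the inequality required in Definition \ref{d:def}, so $(A,G)$ is definitizable.

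Finally I would invoke the two main theorems. The only zero of $p$ is $0$, which is real, so Theorem \ref{t:main}(a) forces the non-real spectrum of $AG$ to be empty, that is, $\sigma(AG)\subset\R$. Moreover, any non-zero critical point of $(A,G)$ would have to be a zero of $p$, which is impossible, while $0\in\rho(AG)$ is not a critical point and, as $G$ is boundedly invertible, zero is not forced to be critical either; hence $c(A,G)=\emptyset$. Theorem \ref{t:sf} then yields a spectral function of $AG$ on $\R$ with empty set of critical points, in particular one without singularities. There is no genuine obstacle in the argument; the one point that needs care — and the only place where the hypothesis $w\in L^\infty(a,b)$ enters — is the bounded invertibility of $A$, which is exactly what produces $0\in\rho(AG)$ and reduces the proposition to the already developed theory of definitizable pairs.
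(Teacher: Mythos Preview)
Your proof is correct and follows essentially the same approach as the paper's: verify condition ($*$) via the bounded invertibility of both $A$ and $G$, check that $p(\la)=\la$ is definitizing because $A\ge 0$, and then invoke Theorems \ref{t:main} and \ref{t:sf}. You spell out a bit more explicitly why $c(A,G)=\emptyset$, whereas the paper simply remarks that the assertions follow directly from the two theorems, but the argument is the same.
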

\begin{proof}
From $w\in L^\infty(a,b)$ it follows that the operator $A = w^{-1}$ is boundedly invertible in $L^2(a,b)$. Hence, $0\in\rho(A)\cap\rho(G)$ which implies that both $AG$ and $GA$ are boundedly invertible. Therefore, \eqref{e:ass} is satisfied for the selfadjoint operators $A$ and $G$. Furthermore, for $f\in\dom AG$ we have $(AGf,Gf)\ge 0$ as $A$ is non-negative. Hence, the pair $(A,G)$ is definitizable with definitizing polynomial $p(\la) = \la$, and the assertions follow directly from Theorems \ref{t:main} and \ref{t:sf}.
\end{proof}

\section*{Acknowledgements}
Friedrich Philipp gratefully acknowledges the support from Deutsche For\-schungs\-ge\-mein\-schaft (DFG) under grant BE 3765/5-1.

\section*{Contact information}
Tomas Ya.\ Azizov: Voronezh State University, Department of Mathematics, Universitetskaya pl.\ 1, 394006 Voronezh, Russia, {\tt azizov@math.vsu.ru}

\vspace{0cm}
Mikhail Denisov: Voronezh State University, Department of Mathematics, Universitetskaya pl.\ 1, 394006 Voronezh, Russia, {\tt denisov.m.1981@gmail.com}

\vspace{0cm}
Friedrich Philipp: Institut f\"ur Mathematik, MA 6-4, Technische Universit\"at Berlin, Stra\ss e des 17.\ Juni 136, 10623 Berlin, Germany, {\tt fmphilipp@gmail.com}
\end{document}